\newcommand{\FF}{\ensuremath{\mathbb{F}}}
\newcommand{\ZZ}{\ensuremath{\mathbb{Z}}}
\newcommand{\QQ}{\ensuremath{\mathbb{Q}}}
\newcommand{\RR}{\ensuremath{\mathbb{R}}}
\newcommand{\CC}{\ensuremath{\mathbb{C}}}
\newcommand{\OO}{\ensuremath{\mathcal{O}}}
\newcommand{\XX}{\ensuremath{\mathcal{X}}}
\newcommand{\LL}{\ensuremath{\mathcal{L}}}
\newcommand{\MM}{\ensuremath{\mathcal{M}}}
\newcommand{\PP}{\ensuremath{\mathcal{P}}}
\newcommand{\supp}{\ensuremath{\operatorname{supp}}}
\renewcommand{\Re}{\ensuremath{\operatorname{Re}}}
\renewcommand{\Im}{\ensuremath{\operatorname{Im}}}
\newtheorem{theorem}{Theorem}[section]
\newtheorem{lemma}[theorem]{Lemma}
\theoremstyle{definition}
\numberwithin{equation}{section}
\begin{document}

\title[Ideal forms\dots]{Ideal forms of Coppersmith's theorem\\
and Guruswami-Sudan list decoding}

\author{Henry Cohn}
\address{Microsoft Research New England\\
One Memorial Drive\\
Cambridge, MA 02142} \email{cohn@microsoft.com}

\author{Nadia Heninger}
\address{Department of Computer and Information Science\\
University of Pennsylvania\\
Philadelphia, PA 19104}
\email{nadiah@cs.princeton.edu}

\thanks{An extended abstract of this paper was published
in the Proceedings of the Second Symposium on Innovations in Computer Science
(Beijing, January 7--9, 2011), Tsinghua University Press, pages 298--308.
N.H.~was supported by an internship at Microsoft Research New England and an
NSF Graduate Research Fellowship.}

\date{June 25, 2013}

\maketitle

\begin{abstract}
We develop a framework for solving polynomial equations with size
constraints on solutions. We obtain our results by showing how to apply
a technique of Coppersmith for finding small solutions of polynomial
equations modulo integers to analogous problems over polynomial rings,
number fields, and function fields. This gives us a unified view of
several problems arising naturally in cryptography, coding theory, and
the study of lattices.  We give (1) a polynomial-time algorithm for
finding small solutions of polynomial equations modulo ideals over
algebraic number fields, (2) a faster variant of the Guruswami-Sudan
algorithm for list decoding of Reed-Solomon codes, and (3) an algorithm
for list decoding of algebraic-geometric codes that handles both
single-point and multi-point codes.  Coppersmith's algorithm uses
lattice basis reduction to find a short vector in a carefully
constructed lattice; powerful analogies from algebraic number theory
allow us to identify the appropriate analogue of a lattice in each
application and provide efficient algorithms to find a suitably short
vector, thus allowing us to give completely parallel proofs of the
above theorems.
\end{abstract}

\section{Introduction}

Many important problems in areas ranging from cryptanalysis to coding theory
amount to solving polynomial equations with side constraints or partial
information about the solutions. One of the most important cases is solving
equations given size bounds on the solutions. Coppersmith's algorithm is a
celebrated technique for finding small solutions to polynomial equations
modulo integers, and it has many important applications in cryptography,
particularly in the cryptanalysis of the RSA cryptosystem.

In this paper, we show how the ideas of Coppersmith's theorem can be
extended to a more general framework encompassing the original
number-theoretic problem, list decoding of Reed-Solomon and
algebraic-geometric codes, and the problem of finding solutions to
polynomial equations modulo ideals in rings of algebraic integers.
These seemingly different problems are all perfectly analogous when
viewed from the perspective of algebraic number theory.

Coppersmith's algorithm provides a key example of the power of lattice basis
reduction. To extend the method beyond the integers, we examine the analogous
structures for polynomial rings, number fields, and function fields. Ideals
over number fields have a natural embedding into a lattice, and thus we can
find a short vector simply by applying the LLL algorithm to this canonical
embedding. In contrast to integer lattices, it turns out that lattice basis
reduction is much easier over a lattice of polynomials, and in fact a
shortest vector can always be found in polynomial time.  Recasting the list
decoding problem in this framework allows us to take advantage of very
efficient reduction algorithms and thus achieve the fastest known list
decoding algorithm for Reed-Solomon codes.

To extend this approach to function fields, we must overcome certain
technical difficulties.  Along the way, we prove a more general result
about finding short vectors under arbitrary non-Archimedean norms, which may
have further applications beyond list decoding of algebraic-geometric codes.
As an illustration of the generality of our approach, we give the first list
decoding algorithm that works for all algebraic-geometric codes, not just
those defined using a single-point divisor.

In the remainder of the introduction, we set up our framework with a
brief review of Coppersmith's theorem, and then state our theorems on
polynomial rings, number fields, and function fields.

\subsection{Coppersmith's theorem}

The following extension of Coppersmith's theorem \cite{coppersmith} was
developed by Howgrave-Graham \cite{howgrave-graham-gcd} and May
\cite{may:thesis}.

\begin{theorem}[\cite{coppersmith,howgrave-graham-gcd,may:thesis}]
\label{theorem:coppersmith} Let $f(x)$ be a monic polynomial of degree
$d$ with coefficients modulo an integer $N>1$, and suppose $0 < \beta
\le 1$. In time polynomial in $\log N$ and $d$, one can find all
integers $w$ such that
\[|w| \leq N^{\beta^2/d}\]
and
\[\gcd(f(w),N) \ge N^\beta.\]
\end{theorem}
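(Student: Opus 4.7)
The plan is to combine Howgrave-Graham's integer-root criterion with a Coppersmith-style lattice of $N$-adic shifts of $f$ and then appeal to LLL. The starting point is Howgrave-Graham's lemma: if $h(x) \in \ZZ[x]$ has degree less than $n$ and the coefficient vector of $h(Xx)$ has Euclidean norm less than $M/\sqrt{n}$, and if $h(w) \equiv 0 \pmod{M}$ for some integer $w$ with $|w| \le X$, then in fact $h(w) = 0$ in $\ZZ$. This is an elementary application of Cauchy--Schwarz to $h(w) = \sum_i a_i w^i$.

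I would build a lattice $\Lambda$ whose basis vectors are the coefficient vectors of $g_{i,j}(Xx)$, where
\[
g_{i,j}(x) \;=\; N^{\max(m-j,\,0)}\, x^i f(x)^j,
\]
with $(i,j)$ ranging over $0 \le j \le m-1$, $0 \le i < d$, together with $j = m$, $0 \le i < t$. Here $m$ and $t \ge d$ are integer parameters and $X = \lfloor N^{\beta^2/d} \rfloor$. Because $\gcd(f(w), N) \ge N^\beta$, each $g_{i,j}(w)$ is divisible by $N^{m\beta}$: for $j < m$ the factor $N^{m-j}$ combines with $N^{j\beta} \mid f(w)^j$ to give $N^{m-j(1-\beta)} \ge N^{m\beta}$, while for $j = m$ the polynomial $f(w)^m$ is itself divisible by $N^{m\beta}$. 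Ordering by degree, the basis matrix is triangular (since $f$ is monic), so $\det \Lambda$ is the product of the diagonal entries $N^{\max(m-j,0)} X^{i+jd}$.

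LLL then produces a lattice vector of norm at most $2^{(n-1)/4}(\det \Lambda)^{1/n}$, where $n = \dim \Lambda = dm + t$; this vector corresponds to a $\ZZ$-linear combination $h(x)$ of the $g_{i,j}$, which still satisfies $h(w) \equiv 0 \pmod{N^{m\beta}}$ at every candidate $w$. A direct determinant computation shows that, with $t \approx d(1-\beta)m/\beta$ and $m$ large, this LLL bound falls below $N^{m\beta}/\sqrt{n}$ precisely when $X \le N^{\beta^2/d}$, so Howgrave-Graham's lemma forces $h(w) = 0$ in $\ZZ$. The integer roots of $h$ are then enumerated in polynomial time by factorization over $\ZZ[x]$, and each candidate is tested against the condition $\gcd(f(w),N) \ge N^\beta$ directly.

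The main obstacle is the parameter balancing. Using only the $j < m$ rows gives, in the limit $m \to \infty$, an achievable exponent of only $(2\beta-1)/d$ for $X$, which is vacuous for $\beta \le 1/2$; the extra shifts at $j = m$ with $i \ge d$ are what push the exponent up to the optimal $\beta^2/d$. Setting $t = \alpha m$, the achievable exponent tends to $\frac{2\beta}{d+\alpha} - \frac{d}{(d+\alpha)^2}$, which is maximized at $\alpha = d(1-\beta)/\beta$ with optimal value $\beta^2/d$. To turn this asymptotic into a genuine polynomial-time algorithm, I would take $m$ and $t$ of size polynomial in $\log N$ and $1/\varepsilon$ for a slack $\varepsilon > 0$ absorbed into the bound, track the $2^{O(n)}$ factor from LLL, and verify that every step, including the $\ZZ[x]$ factorization, runs in time polynomial in $\log N$ and $d$.
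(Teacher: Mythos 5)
Your proposal is essentially the same as the paper's: both build the lattice from coefficient vectors of $N^{k-i}x^j f(x)^i$ (for $i<k$, plus extra shifts of $f^k$), compute the same triangular determinant, invoke LLL, pass from $\ell_2$ to $\ell_1$ via Cauchy--Schwarz (your explicit appeal to Howgrave-Graham's lemma is exactly the paper's inline argument), and balance parameters identically to hit the exponent $\beta^2/d$, including absorbing a slack $\varepsilon$.

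One small slip worth fixing: you assert that $N^{j\beta}\mid f(w)^j$ (and hence $N^{m\beta}\mid g_{i,j}(w)$), but this is false in general; indeed $N^\beta$ need not even be an integer. The correct statement, as in the paper, is to set $B=\gcd(f(w),N)$, note $B\ge N^\beta$, and observe that $B$ divides both $N$ and $f(w)$, so $B^{m}\mid N^{m-j}f(w)^j$. Then Howgrave-Graham's criterion is applied with the integer modulus $B^m$: once $|h(w)|<N^{m\beta}\le B^m$ and $B^m\mid h(w)$, it follows that $h(w)=0$. With that replacement the argument goes through exactly as written.
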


Note that when $\beta=1$, this amounts to finding all sufficiently small
solutions of $f(w) \equiv 0 \pmod{N}$, and the general theorem amounts to
solving $f(w) \equiv 0 \pmod{B}$, where $B$ is a large, unknown factor of
$N$.

We give a brief example to illustrate the power of this theorem in
cryptography \cite{coppersmith,howgrave-graham-gcd}. Imagine that an
adversary has obtained through a side-channel attack some knowledge
about one of the prime factors $p$ of an RSA modulus $N = pq$, for
example some of its most significant bits. We denote this known
quantity by $r$. Then we may write $p = r+w$, where the bound on $w$
depends on how many bits of $p$ are known.  Suppose more than half of
the bits have leaked, i.e., $0 \le w \le N^{1/4 - o(1)}$ (we assume, as
is typical, that $p$ and $q$ are both $N^{1/2+o(1)}$). Now let $f(x) =
x+r$ and $\beta = 1/2 + o(1)$. Theorem~\ref{theorem:coppersmith} tells
us that we can in polynomial time learn $w$, and hence $p$, thereby
factoring $N$.

Further applications of this theorem in cryptography include other
partial key recovery attacks against RSA \cite{boneh:rsa, blomer:rsa},
attacks on stereotyped messages and improper padding
\cite{coppersmith}, and the proof of security for the RSA-OAEP+ padding
scheme \cite{shoup:oaep}.  See \cite{may:survey} for many other
applications.

It is remarkable that Theorem~\ref{theorem:coppersmith} allows us to
solve polynomial equations modulo $N$ without knowing the factorization
of $N$, and this fact is critical for the cryptanalytic applications.
However, even if one already has the factorization,
Theorem~\ref{theorem:coppersmith} remains nontrivial if $N$ has many
prime factors.
To solve an equation modulo a composite number, one generally solves
the equation modulo each prime power factor of the modulus and uses the
Chinese remainder theorem to construct solutions for the original
modulus. (Recall that modulo a prime, such equations can be solved in
polynomial time, and we can use Hensel's lemma to lift the solutions to
prime power moduli.) The number of possible solutions can be
exponential in the number of prime factors, in which case it is
infeasible to enumerate all of the roots and then select those that are
within the desired range. In fact, the problem of determining whether
there is a root in an arbitrary given interval is NP-complete
\cite{manders}.  Of course, if $N$ has only two prime factors, then
there can be only $d^2$ solutions modulo $N$, but our methods are
incapable of distinguishing between numbers with two or many prime
factors.

It is not even obvious that the number of roots modulo $N$ of size at most
$N^{1/d}$ is polynomially bounded in terms of the number of digits of $N$.
From this perspective, the exponent $1/d$ is optimal without further
assumptions, because $f(x)=x^d$ will have exponentially many roots modulo
$N=k^d$ of absolute value at most $N^{1/d+\varepsilon}$ (specifically, the
$2N^{\varepsilon}$ such multiples of $k$). Theorem~\ref{theorem:coppersmith}
can be seen as a constructive bound on the number of solutions. See
\cite{coppersmith:small} for further discussion of this argument and
\cite{konyagin} for non-constructive bounds.

\subsection{A polynomial analogue}

To introduce our analogies, we will begin with the simplest and most
familiar case: polynomials.

There is an important analogy in number theory between the ring $\ZZ$
of integers and the ring $F[z]$ of univariate polynomials over a field
$F$.  To formulate the analogue of Coppersmith's theorem, one just
needs to recognize that the degree of a polynomial is the appropriate
measure of its size. Thus, the polynomial version of Coppersmith's
theorem should involve finding low-degree solutions of polynomial
equations over $F[z]$ modulo a polynomial $p(z)$.  That is, given a
polynomial $f(x) = \sum_{i=0}^d f_i(z) x^i$ with coefficients $f_i(z)
\in F[z]$, we seek low-degree polynomials $w(z) \in F[z]$ such that
$f(w(z)) \equiv 0 \pmod{p(z)}$.

In the following theorem, we assume that we can efficiently represent
and manipulate elements of $F$, and that we can find roots in $F[z]$ of
polynomials over $F[z]$.  For example, that holds if we can factor
bivariate polynomials over $F$ in polynomial time.  This assumption
holds for many fields, including $\QQ$ and even number fields
\cite{lenstra:multi} as well as all finite fields \cite{vzGK} (with a
randomized algorithm in the latter case).

\begin{theorem}\label{theorem:polycop}
Let $f(x)$ be a monic polynomial in $x$ of degree $d$ over $F[z]$ with
coefficients modulo $p(z)$, where $\deg_z p(z) = n > 0$. In polynomial time,
for $0 < \beta \le 1$, we can find all $w(z) \in F[z]$ such that
\[
\deg_z w(z) < \beta^2n/d
\]
and
\[
\deg_z \gcd(f(w(z)),p(z)) \ge \beta n.
\]
\end{theorem}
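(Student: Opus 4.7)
\noindent\emph{Proof plan.} The plan is to mirror the Coppersmith/Howgrave-Graham strategy, with $\deg_z$ playing the role of $\log|\cdot|$. Write $B(z) = \gcd(f(w(z)), p(z))$, so that $B \mid p$, $B \mid f(w)$, and $\deg_z B \ge \beta n$ for every $w$ we seek. The objective is to construct a single nonzero $h(x) \in F[z][x]$ that forces $h(w(z)) = 0$ as an identity in $F[z]$; once such an $h$ is in hand, $w(z)$ is recovered by factoring $h$ over $F[z]$ and discarding spurious roots, which is possible by the factorization hypothesis on $F$.

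I would produce $h$ by lattice reduction in the $F[z]$-submodule $\LL \subseteq F[z][x]$ generated by
\[
g_{i,j}(x) = p(z)^{k-j}\,x^i\,f(x)^j,\qquad 0 \le j \le k,\ 0 \le i < d_j,
\]
with $d_j = d$ for $j < k$ and $d_k = t$, where $k$ and $t$ are auxiliary parameters. Since $B$ divides both $p(z)$ and $f(w)$, the value $g_{i,j}(w)$ is always divisible by $B(z)^k$, and hence so is $h(w)$ for every $h \in \LL$. Let $X$ be the target bound on $\deg_z w$. The polynomial analogue of Howgrave-Graham's lemma is then the observation that if $h(x) = \sum_i h_i(z)\,x^i \in \LL$ satisfies $\max_i\bigl(\deg_z h_i + iX\bigr) < \beta k n$, then $\deg_z h(w(z)) < \beta k n \le k\deg_z B$, yet $B^k \mid h(w)$, forcing $h(w) = 0$. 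Finding a suitable $h$ thus reduces to a short-vector problem in $\LL$ under the weighted norm $\max_i(\deg_z h_i + iX)$.

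The main obstacle is this short-vector computation, which I would dispatch by invoking the polynomial-lattice reduction developed later in the paper. Unlike over $\ZZ$, an $F[z]$-lattice admits an exact shortest vector in polynomial time (for example by computing a weak Popov form), and the weights $iX$ are absorbed by shifting the row degrees. A reduced basis has the property that its sum of weighted row degrees equals the weighted determinant degree, which here works out to $n\cdot dk(k+1)/2 + X(dk+t)(dk+t-1)/2$; so the shortest vector has weighted degree at most $1/(dk+t)$ times this sum. An optimization identical to Coppersmith's---setting $t \approx (\beta^{-1}-1)dk$ and letting $k$ grow---brings this bound below $\beta k n$ exactly when $X < \beta^2 n/d$, yielding the stated exponent. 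Completing the proof is then routine: factor $h(x) \in F[z][x]$, test each candidate root against the degree and gcd conditions, and return those that pass. Since $\dim_{F[z]}\LL = O(d/\beta)$ can be taken constant for fixed $d$ and $\beta$, the whole procedure runs in time polynomial in $n$ and $d$.
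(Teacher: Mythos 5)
Your proposal follows the paper's proof essentially step for step: the same generating set $\{p(z)^{k-j} x^i f(x)^j\}$, the same observation that $B^k$ divides $h(w)$ for all $h$ in the $F[z]$-module, the same ``polynomial Howgrave-Graham'' conclusion that weighted degree less than $\beta k n$ forces $h(w)=0$, the same determinant formula $\ell\, m(m-1)/2 + nd\, k(k+1)/2$ with $m = dk+t$, the same appeal to exact shortest-vector computation over $F[z]$, and the same parameter optimization $t \approx (\beta^{-1}-1)dk$. The only cosmetic difference is that you work with the weighted norm $\max_i(\deg_z h_i + iX)$ directly, whereas the paper substitutes $x \mapsto z^\ell x$ and then uses the unweighted $\max_i \deg_z$-norm; these are equivalent.

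One small inaccuracy in the running-time remark: the lattice dimension $m = dk+t$ cannot be taken constant for fixed $d,\beta$. Your parameter choice gives $m \approx dk/\beta$ with $k$ free, but to push the achievable bound $\ell \le n(\beta^2/d - \varepsilon)$ all the way up to the target $\lceil \beta^2 n/d\rceil - 1$, the slack $\varepsilon$ must shrink as $n$ grows (the paper takes $\varepsilon < 1/(n^2 d)$, hence $m = O(\beta n^2 d)$). The algorithm remains polynomial in $n$ and $d$, but because $m$ is polynomially bounded, not because it is constant.
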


In the case when $p(z)$ factors completely into linear factors, this
theorem is equivalent to the influential Guruswami-Sudan theorem on
list decoding of Reed-Solomon codes \cite{guruswami-sudan}.  See
Section~\ref{section:rsld} for the details of the equivalence.  The
above statement of Theorem~\ref{theorem:polycop}, as well as the
extension to higher-degree irreducible factors, appear to be new.

It has long been recognized that the Coppersmith and Guruswami-Sudan
theorems are in some way analogous, although we are unaware of any
previous, comparably explicit statement of the analogy. Boneh used
Coppersmith's theorem in work on Chinese remainder theorem codes
inspired by the Guruswami-Sudan theorem \cite{boneh:smooth}, and in a
brief aside in the middle of \cite{bernstein}, Bernstein noted that the
Guruswami-Sudan theorem is the polynomial analogue of a related theorem
of Coppersmith, Howgrave-Graham, and Nagaraj \cite{chgn}.
Alekhnovich~\cite{alekhnovich} formulated the problem of list-decoding of
Reed-Solomon codes in terms of finding a Gr\"obner basis for a polynomial ideal,
and he gave an algorithm for finding a short vector in a polynomial lattice to do so.
See also
\cite{guruswami-sahai-sudan} for a general ideal-theoretic setting for
coding theory, and \cite{sudan} for a survey of relationships between
list decoding and number-theoretic codes.

\subsection{Number fields}

A number field is a finite extension of the field $\QQ$ of rational numbers.
Thus it is natural to investigate how a statement over the rationals, the
simplest number field, extends to more general number fields.  We extend our
analogy by adapting Coppersmith's theorem to the number field case.

Every number field $K$ is of the form
$$
K = \QQ(\alpha) = \{a_0 + a_1\alpha + \dots + a_{n-1}\alpha^{n-1}
: a_0,\dots,a_{n-1} \in \QQ\},
$$
where $\alpha$ is an algebraic number of degree $n$ (i.e., a root of an
irreducible polynomial of degree $n$ over $\QQ$). The degree of $K$ is
defined to be $n$.  Within $K$, there is a ring $\OO_K$ called the ring of
algebraic integers in $K$.  It plays the same role within the field $K$ as
the ring $\ZZ$ of integers plays within $\QQ$. Sometimes $\OO_K$ is of the
form $\ZZ[\alpha]$, but sometimes it does not even have a single generator.

Recall that an \emph{ideal} in a ring is a non-empty subset closed
under addition and under multiplication by arbitrary elements of the
ring. (Intuitively, it is a subset modulo which one can reduce elements
of the ring.)  For example, the multiples of any fixed element form an
ideal, called a \emph{principal ideal}.  In $\ZZ$ every ideal is of
that form, but that is not usually true in $\OO_K$.

In $\OO_K$, we study the solutions of polynomial equations modulo
ideals, the analogue of such equations modulo integers in $\ZZ$. To
measure the size of a nonzero ideal $I$ in $\OO_K$, we will use its
norm $N(I) = |\OO_K/I|$, i.e., the size of the quotient ring.

A final conceptual issue that makes this case more subtle is that a
number field of degree $n$ has $n$ absolute values $|\cdot|_i$
corresponding to its $n$ embeddings into $\CC$ (as we will explain in
Section~\ref{section:numberfield}), and to obtain the theorem it is
necessary to bound them all simultaneously.

The number field analogue of Coppersmith's theorem is as follows:

\begin{theorem}
\label{theorem:nf-coppersmith} Let $K$ be a number field of degree $n$
with ring of integers $\OO_K$, $f(x) \in \OO_K[x]$ a monic polynomial
of degree $d$, and $I \subsetneq \OO_K$ an ideal in $\OO_K$. Assume
that we are given $\OO_K$ and $I$ explicitly by integral bases. For $0
< \beta \le 1$ and $\lambda_1,\dots,\lambda_n>0$, in time polynomial in
the input length and exponential in $n^2$ we can find all $w \in \OO_K$
with $|w|_i < \lambda_i$ such that
\[
N(\gcd(f(w)\OO_K,I)) > N(I)^\beta,
\]
provided that
\[
\prod_i \lambda_i < N(I)^{\beta^2/d}.
\]
Furthermore, in polynomial time we can find all such $w$ provided that
\[
\prod_i \lambda_i < (2+o(1))^{-n^2 / 2}  N(I)^{\beta^2/d}.
\]
\end{theorem}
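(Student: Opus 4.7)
The plan is to transplant Coppersmith's lattice-based argument into the number field setting, replacing the archimedean absolute value on $\ZZ$ by the Minkowski embedding of $\OO_K$ and measuring size by the absolute norm. Fix integer parameters $m$ and $t$ to be optimized later, and let $\MM\subset\OO_K[x]$ be the $\ZZ$-module spanned by $a\,x^i f(x)^k$ with $0\le i<d$, $0\le k\le m$, $a$ in a $\ZZ$-basis of $I^{m-k}$, together with ``shifts'' $b\,x^j f(x)^m$ for $0\le j<t$, $b$ in a $\ZZ$-basis of $\OO_K$. Writing $J=\gcd(f(w)\OO_K,I)$, so that $I\subseteq J$, $f(w)\in J$, and $N(J)\ge N(I)^\beta$, every generator evaluated at $w$ lies in $J^m$; hence $g(w)\in J^m$ for all $g\in\MM$. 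The number field analogue of Howgrave-Graham's lemma is then immediate: if $g(w)\neq 0$, then
\[
\textstyle\prod_i |g(w)|_i = |N_{K/\QQ}(g(w))|\ge N(J)^m\ge N(I)^{\beta m},
\]
so it suffices to produce $g\in\MM$ with $\prod_i|g(w)|_i<N(I)^{\beta m}$.

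Next I would realize $\MM$ as a rank-$nD$ lattice $\Lambda$, where $D=d(m+1)+t$, by sending $g(x)=\sum_j c_j x^j$ to the vector whose $(i,j)$ coordinate is $\lambda_i^j\sigma_i(c_j)$ (splitting complex embeddings into real and imaginary parts in the usual way). Monicity of $f$ gives the natural basis a triangular structure, and a routine covolume computation yields
\[
\operatorname{covol}(\Lambda)=c_K^{D}\cdot\Bigl(\textstyle\prod_i \lambda_i\Bigr)^{D(D-1)/2}|\Delta_K|^{D/2}N(I)^{dm(m+1)/2}
\]
for a constant $c_K$ depending only on the signature of $K$. Applying Cauchy-Schwarz within each embedding and AM-GM across the $n$ embeddings, every $v\in\Lambda$ encoding $g$ satisfies $\prod_i|g(w)|_i\le(D/n)^{n/2}\|v\|_2^n$, reducing the problem to finding one sufficiently short $v\in\Lambda$.

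For the exponential-time claim I would invoke Minkowski's theorem together with a deterministic SVP enumeration, which produces $v$ of $\ell_2$-length within a polynomial factor of $\operatorname{covol}(\Lambda)^{1/(nD)}$ in time $2^{O(nD)}=2^{O(n^2)}$ once $m,t$ are chosen. Substituting the covolume, optimizing in $t$ through the balance $D\approx dm/\beta$ (i.e.\ $t\approx dm(1-\beta)/\beta$), and letting $m\to\infty$ reproduces the Coppersmith identity and yields $\prod_i\lambda_i<N(I)^{\beta^2/d}$. For the polynomial-time version I would use LLL in place of exact SVP: this inflates $\|v\|_2$ by $2^{(nD-1)/4}$ and hence the product bound by $2^{n(nD-1)/4}$, whose ratio to the $(D-1)/2$ exponent governing $\prod_i\lambda_i$ tends to $n^2/2$ as $D\to\infty$, and is absorbed cleanly into the $(2+o(1))^{-n^2/2}$ slack. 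Finally, any short $v$ decodes to a nonzero $g\in\OO_K[x]$ of degree less than $D$ with $g(w)=0$, and factoring $g$ over $\OO_K$ by the assumed algorithm \cite{lenstra:multi} recovers $w$.

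The hardest part will be the careful bookkeeping in the covolume and short-vector analysis: threading the real/complex signature and the discriminant of $K$ through the weighting by $\lambda_i^j$ so as to isolate the sharp exponent $\beta^2/d$ in both statements and the precise constant $(2+o(1))^{-n^2/2}$ coming from LLL. A related subtlety is verifying that the AM-GM passage from a single $\ell_2$-short vector to $\prod_i|g(w)|_i$ loses no more than the stated constant, and that every $w$ in the target region indeed arises as an $\OO_K$-root of the recovered $g$.
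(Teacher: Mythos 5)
Your lattice construction, the Howgrave--Graham-type norm lemma, the weighted Minkowski embedding, and the Cauchy--Schwarz/AM--GM passage to $\prod_i|g(w)|_i\le(D/n)^{n/2}\|v\|_2^n$ all match the paper's argument closely, and the LLL analysis for the polynomial-time bound (the $2^{n(nD-1)/4}$ inflation settling to a $2^{n^2/2}$ penalty on $\prod_i\lambda_i$) is essentially the paper's computation.

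The exponential-time claim, however, is handled incorrectly. To isolate the sharp exponent $\beta^2/d$ with no $\varepsilon$ loss you yourself note that $m\to\infty$ is required; in fact $m$ must grow polynomially with $\log N(I)$ (the paper takes $\varepsilon\approx 1/\log N(I)$ and $m=\Theta(\beta/\varepsilon)$). Consequently $D=d(m+1)+t$ and the lattice dimension $nD$ are polynomial in the input length, not $O(n)$, so exact SVP enumeration costs $2^{O(nD)}=2^{\mathrm{poly}(\text{input})}$, which is exponential in the input size and far exceeds the claimed ``polynomial in the input length and exponential in $n^2$.'' Your identity $2^{O(nD)}=2^{O(n^2)}$ tacitly assumes $D=O(n)$, which is incompatible with pushing $\varepsilon\to 0$. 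The paper obtains the sharp bound by a different mechanism: it first establishes the polynomial-time statement with the factor $(2+o(1))^{-n^2/2}$, and then observes that the target box $\prod_i(-\lambda_i,\lambda_i)$ of product $N(I)^{\beta^2/d}$ can be tiled by $(2+o(1))^{n^2/2}$ translated sub-boxes, each of product $(2+o(1))^{-n^2/2}N(I)^{\beta^2/d}$; running the polynomial-time LLL-based algorithm once per sub-box (replacing $f(x)$ by $f(x+r)$ for each translate $r$) gives total cost $\mathrm{poly}\cdot(2+o(1))^{n^2/2}$. You need this box-covering step, not exact SVP, to reconcile the sharp bound with the stated running time.
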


Equivalently, we can find small solutions of equations $f(x) \equiv 0
\pmod{J}$, where the ideal $J$ is a large divisor of $I$. Using improved
lattice basis reduction algorithms \cite{AKS} we can achieve a running time
that is slightly subexponential in $n^2$.  Note also that $\gcd(f(w)\OO_K,I)$
is the largest ideal that contains both the principal ideal $f(w)\OO_K$ and
$I$; in other words, it is their sum $f(w)\OO_K + I$.

When $n$ is fixed, our algorithm runs in polynomial time, but the
dependence on $n$ is exponential.  That appears to be unavoidable using
our techniques, but it is not a serious drawback.  Many
number-theoretic algorithms behave poorly for high-degree number
fields, and most computations are therefore done in low-degree cases.
Even for a fixed number field $K$, Theorem~\ref{theorem:nf-coppersmith}
remains of interest.

Similar ideas to Theorem~\ref{theorem:nf-coppersmith} were developed independently
by Coxon for list decoding of
number field codes \cite{coxon}.  His algorithm is more flexible than our theorem
in allowing weighted list decoding, but he does not develop an analogue of
Coppersmith's theorem.  Similar results were also achieved by Biasse and Quintin~\cite{biasse-quintin}.

Several problems over number fields have been proposed as the basis for
cryptosystems; see, for example, \cite{nfcrypto} for a survey of problems
over quadratic number fields.  More recently, Peikert and Rosen
\cite{peikert} and Lyubashevsky, Peikert, and Regev \cite{LPR} developed
lattice-based cryptographic schemes using lattices representing the canonical
embeddings of ideals in number fields. As a special case,
Theorem~\ref{theorem:nf-coppersmith} can be used to solve certain cases of
the bounded-distance decoding problem for such lattices, and improving our
approximation factor from $(2+o(1))^{-n^2/2}$ to $2^{-n} \sqrt{|\Delta_K|}$,
where $\Delta_K$ is the discriminant of $K$, would solve the problem in
general; see Section~\ref{subsec:bdd} for more details.

In addition, number fields have many applications to purely classical
problems, the most prominent example being the number field sieve
factoring algorithm.  All sieve algorithms require generating smooth
numbers, and in this context Boneh \cite{boneh:smooth} showed how to
use Coppersmith's theorem to find smooth integer solutions of
polynomials in short intervals.  Using
Theorem~\ref{theorem:nf-coppersmith} analogously, one can do the same
over number fields.

We prove Theorem~\ref{theorem:nf-coppersmith} in
Section~\ref{section:numberfield}.

\subsection{Function fields}
\label{subsec:ffsintro}

Algebraic number theorists have developed a more sophisticated version of the
analogy between the ring of integers and polynomial rings.  In this analogy,
the analogues of number fields are called function fields; they are the
fields of rational functions on algebraic curves over finite fields. The
parallels between number fields and function fields are truly astonishing,
and this analogy has played a crucial role in the development of number
theory over the last century.

We now complete our analogy in this paper by extending Coppersmith's theorem to the
function field case.  See Section~\ref{section:functionfield} for a
detailed review of the setting and notation.

\begin{theorem}
\label{theorem:ff-coppersmith} Let $\XX$ be a smooth, projective, absolutely
irreducible algebraic curve over $\FF_q$, and let $K$ be its function field
over $\FF_q$.  Let $D$ be a divisor on $\XX$ whose support $\supp(D)$ is
contained in the $\FF_q$-rational points $\XX(\FF_q)$, let $S$ be a subset of
$\XX(\FF_q)$ that properly contains $\supp(D)$, let $\OO_S$ be the subring of
$K$ consisting of functions with poles only in $S$, and let $\LL(D)$ be the
Riemann-Roch space
\[
\LL(D) = \{0\} \cup \{f \in K^* : (f)+D \succeq 0\}.
\]
Let $f(x) \in \OO_S[x]$ be a monic polynomial of degree $d$, and let
$I$ be a proper ideal in $\OO_S$.

Then in probabilistic polynomial time, we can find all $w \in \LL(D)$
such that
\[
N(\gcd(f(w)\OO_S,I)) \ge N(I)^\beta,
\]
provided that
\[
q^{\deg(D)} < N(I)^{\beta^2/d}.
\]
\end{theorem}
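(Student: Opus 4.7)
My plan is to parallel the proof template used for Theorems \ref{theorem:coppersmith}, \ref{theorem:polycop}, and \ref{theorem:nf-coppersmith}: build auxiliary polynomials whose values at any putative root all lie in a high power of $I$, use lattice reduction to find an $\OO_S$-linear combination that is small in the sense of pole divisors, and argue that for $w \in \LL(D)$ satisfying the size hypothesis the small combination must vanish at $w$ identically as an element of $K$. Recovering $w$ then reduces to univariate root-finding over the function field $K$.

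Concretely, fix a parameter $t$ to be optimized (ultimately $t \approx d/\beta$). For $0 \le i \le t$, $0 \le j < d$, and $a$ ranging over a generating set of the ideal $I^{t-i}$, form
\[
g_{i,j,a}(x) = a \cdot x^j f(x)^i \in \OO_S[x].
\]
If $w \in \OO_S$ and $J := f(w)\OO_S + I$ has $N(J) \ge N(I)^\beta$, then $g_{i,j,a}(w) = a\, w^j f(w)^i \in I^{t-i} J^i \subseteq J^t$, and the same holds for every $\OO_S$-linear combination. Now pick an auxiliary effective divisor $E$ supported on $S$ and restrict attention to combinations $g(x) = \sum_k c_k x^k$ whose coefficient $c_k$ lies in $\LL(E - kD)$; this guarantees $g(w) \in \LL(E)$ for every $w \in \LL(D)$. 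The admissible combinations form a finite-dimensional $\FF_q$-subspace $M$ on which the natural non-Archimedean norms are the pole orders at the places of $S$.

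Apply the non-Archimedean shortest-vector algorithm developed in Section~\ref{section:functionfield} to $M$. By Riemann-Roch, $\dim_{\FF_q} M$ grows with $\deg(E)$, and a counting argument shows that once $\dim_{\FF_q} M$ exceeds $\dim_{\FF_q}\bigl(\LL(E)/(\LL(E) \cap J^t)\bigr)$, a nonzero $g \in M$ exists whose actual pole divisor $E'$ satisfies $\deg(E') < t \deg(D_I)$, where $D_I$ is the effective divisor on $\XX \setminus S$ corresponding to $I$. For such a $g$, any admissible root $w$ gives $g(w) \in \LL(E') \cap J^t$, and because a nonzero element of $J^t$ has at least $t \deg(D_I)$ zeros on $\XX \setminus S$ (hence that many poles elsewhere), this intersection is zero. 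Balancing the ``supply'' dimension—which depends on $\deg(D)$ through the spaces $\LL(E-kD)$—against the ``demand'' dimension and optimizing $t$ reproduces the stated condition $q^{\deg(D)} < N(I)^{\beta^2/d}$. Finally, factor the univariate polynomial $g(x) \in K[x]$ using a probabilistic algorithm for bivariate factorization over $\FF_q$ to list all roots in $\LL(D)$; this is the source of the ``probabilistic'' qualifier.

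The main obstacle is the bookkeeping between the ideal arithmetic in $\OO_S$ and the divisor arithmetic on $\XX$: proper ideals of $\OO_S$ correspond to effective divisors supported on $\XX \setminus S$, whereas the pole-order constraints live on all of $\XX$, so every Riemann-Roch dimension count picks up a genus-dependent correction that must be absorbed into lower-order terms in $t$. The non-Archimedean lattice reduction itself is imported as a black box from Section~\ref{section:functionfield}, where it is shown to find a genuine shortest vector in polynomial time; that exactness is precisely what spares the function field case the $(2+o(1))^{n^2/2}$ approximation factor that appears in Theorem~\ref{theorem:nf-coppersmith}.
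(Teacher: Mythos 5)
Your proposal takes a genuinely different route from the paper. The paper develops an $\FF_q[z]$-module structure: Lemma~\ref{lemma:equalval} produces $z \in \OO_S$ with equal pole order at every point of $S$, so that $\OO_S$ becomes a free $\FF_q[z]$-module; Lemma~\ref{lemma:sap} (strong approximation, which is where $\supp(D) \subsetneq S$ and a designated point $p_0$ enter) constructs weighting elements $X_i$ with prescribed valuations; and the non-Archimedean short-vector machinery of Lemma~\ref{lemma:findallsmall} is then applied to the weighted module $T\MM$. You instead impose Riemann-Roch conditions $c_k \in \LL(E - kD)$ directly and avoid the $\FF_q[z]$-module and the $X_i$ entirely, which is closer in spirit to the original Guruswami-Sudan linear-algebra argument. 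What the paper's formulation buys is the unified Coppersmith-style lattice presentation and (in the Reed-Solomon case) the running-time advantages of fast polynomial lattice reduction; what yours buys is a shorter path through Riemann-Roch.

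That said, one step in the middle is garbled. You write that ``once $\dim_{\FF_q} M$ exceeds $\dim_{\FF_q}\bigl(\LL(E)/(\LL(E) \cap J^t)\bigr)$, a nonzero $g \in M$ exists whose actual pole divisor $E'$ satisfies $\deg(E') < t\deg(D_I)$.'' There is no natural map from $M$ (a space of polynomials in $x$) into $\LL(E)/(\LL(E)\cap J^t)$, and $J$ is unknown in any case, so this pigeonhole does not make sense as stated. The correct logic decouples: (i) \emph{any} nonzero $g \in M$ and any admissible $w \in \LL(D)$ give $g(w) \in \LL(E) \cap J^t$ automatically by construction; (ii) if $\deg E < t\beta\deg D_I$ then $\LL(E) \cap J^t = \{0\}$, since a nonzero element of $J^t$ has at least $t\deg D_J \ge t\beta\deg D_I$ zeros outside $S$, hence at least that many poles on $S$, exceeding the budget $\deg E$. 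So all you need is $M \ne 0$, which comes from a Riemann-Roch estimate on $\sum_k \dim_{\FF_q}\bigl(I^{t - \lceil k/d\rceil} \cap \LL(E - kD)\bigr)$ together with the balance $\deg E < t\beta\deg D_I$; no shortest-vector computation is needed on $M$. Indeed $M$ as you define it is an $\FF_q$-vector space, not an $\FF_q[z]$-module, so Lemma~\ref{lemma:findallsmall} does not apply to it directly; you should simply solve the defining linear system over $\FF_q$. Once that step is repaired, the dimension balance does reproduce $q^{\deg D} < N(I)^{\beta^2/d}$ with the genus terms absorbed as you say, so the overall route is sound.
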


In the case when $S$ contains only a single point, the function field version
of Coppersmith's theorem is equivalent to the Guruswami-Sudan theorem on
list-decoding of algebraic-geometric codes, as we will outline in
Section~\ref{section:functionfield}.  The Guruswami-Sudan theorem and the
earlier Shokrollahi-Wasserman theorem \cite{shokrollahi-wasserman} are
specialized to that case, which covers many but not all algebraic-geometric
codes. Our theorem extends list decoding to the full range of such codes.

We assume that we can efficiently compute bases of Riemann-Roch spaces
for divisors in $\XX$.  That can be done in many important cases (for
example, for a smooth plane curve, or even one with ordinary multiple
points \cite{huangierardi}), and it is a reasonable assumption because
even the encoding problem for algebraic-geometric codes requires a
basis of a Riemann-Roch space.  Note also that although our algorithm
is probabilistic, it is guaranteed to give the correct solution in
expected polynomial time; in other words, it is a ``Las Vegas''
algorithm.

Alekhnovich's polynomial module approach to list decoding has been adapted to
a special class of algebraic-geometric codes by Beelen and Brander \cite{beelen-brander2}.
Their work is thus closer in spirit to ours than the Guruswami-Sudan
and Shokrollahi-Wasserman papers are, but they focus on the process of
interpolation and view modules as a means to compute interpolating polynomials.

We prove Theorem~\ref{theorem:ff-coppersmith} in
Section~\ref{section:functionfield}.

\subsection{Analogies in number theory}

The connections we have described are not isolated phenomena. Many
theorems in number theory and algebraic geometry have parallel versions
for the integers and for polynomial rings, or more generally for number
fields and function fields, and translating statements or techniques
between these settings can lead to valuable insights.

One particular advantage of this sort of arbitrage is that proving
results for polynomial rings is usually easier.  For example, the prime
number theorem for $\ZZ$ is a deep theorem, but the analogue for the
polynomial ring $\FF_q[z]$ over a finite field is much simpler.  It
says that asymptotically a $1/n$ fraction of the $q^n$ monic
polynomials of degree $n$ are irreducible, and in fact the error term
is on the order of $q^{n/2}$ (see Lemma~14.38 in \cite{vzgg}). Proving
a similarly strong version of the prime number theorem for $\ZZ$ would
amount to proving the Riemann hypothesis.  Similarly, the ABC
conjecture for $\ZZ$ is a profound unsolved problem, while for
polynomials rings it has an elementary proof \cite{mason}.

Thus, polynomial rings are worlds in which many of the fondest dreams
of mathematicians have come true.  If a result cannot be proved in such
a setting, then it is probably not even worth trying to prove it in
$\ZZ$. If it can be proved for polynomial rings, then the techniques
may not apply to the integers, but they often provide inspiration for
how a proof might work if technical obstacles can be overcome.

Similarly, in computer science many computational problems that appear to be
difficult for integers are tractable for polynomials. For example, factoring
polynomials can be done in polynomial time for many fields, while for
integers the problem seems to be hard.  The polynomial analogue of the
shortest vector problem for lattices can be solved exactly in polynomial time
\cite{vonzurgathen}, while for integer lattices the problem is NP-hard
\cite{ajtai}. This difference in the difficulty of lattice problems is at the
root of the poor running time in Theorem~\ref{theorem:nf-coppersmith} for
number fields of high degree.

The analogies that we develop here between cryptanalysis and coding theory extend
further.  For example, multivariate versions of Coppersmith's theorem
correspond to list decoding of Parvaresh-Vardy and Guruswami-Rudra codes \cite{cohn-heninger}.

\section{Preliminaries}

One of the main steps in Coppersmith's theorem uses lattice basis reduction
to find a short vector in a lattice. In this section, we review preliminaries
on integral lattices and introduce the analogues that we will use in our
generalizations.

\subsection{Integer lattices}

Recall that a \emph{lattice} in $\RR^m$ is a discrete subgroup of rank
$m$. Equivalently, it is the set of integer linear combinations of a
basis of $\RR^m$.

The \emph{determinant} $\det(L)$ of a lattice $L$ is the absolute value
of the determinant of any basis matrix; it is not difficult to show
that it is independent of the choice of basis.  One way to see why is
that the determinant is the volume of the quotient $\RR^m/L$, or
equivalently the volume of a fundamental parallelotope.

One of the fundamental problems in lattice theory is finding short
vectors in lattices, with respect to the $\ell_p$ norm
\[
|v|_p = \left(\sum_{i=1}^m |v_i|^p\right)^{1/p}.
\]
Most often we use the $\ell_2$ norm, which is of course the usual
Euclidean distance.  The LLL lattice basis reduction algorithm
\cite{lll} can be used to find a short vector in a lattice.

\begin{theorem}[\cite{lll}]
Given a basis of a lattice $L$ in $\QQ^m$, a nonzero vector $v \in L$
satisfying
\[
|v|_2 \leq 2^{(m-1)/4} \det(L)^{1/m}
\]
can be found in polynomial time.
\end{theorem}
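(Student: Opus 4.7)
The plan is to introduce a notion of an \emph{LLL-reduced basis} and then describe an algorithm that transforms an arbitrary input basis into one. Given any basis $b_1,\dots,b_m$ of $L$, write $b_1^*,\dots,b_m^*$ for its Gram-Schmidt orthogonalization and set $\mu_{i,j} = \langle b_i, b_j^*\rangle/\|b_j^*\|_2^2$. I would call the basis LLL-reduced if $|\mu_{i,j}| \le 1/2$ for all $j<i$ (size reduction) and $\|b_{i+1}^*\|_2^2 \ge (3/4 - \mu_{i+1,i}^2)\,\|b_i^*\|_2^2$ for all $i$ (the Lov\'asz condition).

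The first step is to verify that an LLL-reduced basis already satisfies the bound claimed in the theorem. The two conditions together force $\|b_{i+1}^*\|_2^2 \ge \tfrac{1}{2}\|b_i^*\|_2^2$, so inductively $\|b_1^*\|_2^2 \le 2^{i-1}\|b_i^*\|_2^2$ for every $i$. Taking the product over $i=1,\dots,m$, using $\det(L)^2 = \prod_i \|b_i^*\|_2^2$ and the elementary identity $\|b_1\|_2 = \|b_1^*\|_2$, rearranges into the desired inequality $\|b_1\|_2 \le 2^{(m-1)/4}\det(L)^{1/m}$.

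The algorithm then alternates two operations on a working basis. A size reduction at $(i,j)$ replaces $b_i$ by $b_i - c\,b_j$, where $c$ is the nearest integer to $\mu_{i,j}$; this is a unimodular transformation that leaves every $b_k^*$ unchanged while forcing $|\mu_{i,j}|\le 1/2$. A swap exchanges $b_i$ and $b_{i+1}$ whenever the Lov\'asz condition fails at index $i$. To bound the number of swaps I would introduce the potential $D = \prod_{i=1}^{m-1}\det(L_i)^2$, where $L_i = \ZZ b_1 + \dots + \ZZ b_i$. A direct calculation shows that size reduction leaves $D$ unchanged, while a swap at position $i$ multiplies $D$ by a factor strictly less than $3/4$. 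Since $L$ is integral, $D$ is a positive integer bounded above by an exponential in the input bit length, so at most polynomially many swaps can occur before the Lov\'asz condition holds everywhere and the algorithm halts.

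The main obstacle I expect is not counting the iterations but controlling the bit complexity of the intermediate data. The Gram-Schmidt coefficients $\mu_{i,j}$ and the squared norms $\|b_i^*\|_2^2$ are rationals whose numerators and denominators could, a priori, blow up during the reduction. The standard resolution is to carry the Gram-Schmidt data in exact rational form and to use Hadamard-type inequalities together with the lower bound on $D$ to argue that all intermediate numerators and denominators stay of polynomial bit length throughout the execution; once this is in place, each arithmetic step runs in polynomial time and the full algorithm does as well.
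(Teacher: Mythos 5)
Your proof is the standard LLL argument, which is exactly what the cited reference \cite{lll} gives; the paper itself does not reprove this theorem, it simply invokes it, so there is no alternative route in the paper to compare against. Your verification that an LLL-reduced basis satisfies the bound is correct: size reduction gives $\mu_{i+1,i}^2 \le 1/4$, so the Lov\'asz condition yields $\|b_{i+1}^*\|_2^2 \ge \tfrac12\|b_i^*\|_2^2$, and chaining this with $\|b_1\|_2 = \|b_1^*\|_2$ and $\det(L) = \prod_i \|b_i^*\|_2$ gives $\|b_1\|_2^m \le 2^{m(m-1)/4}\det(L)$, which is the claim. The potential argument for termination is also standard and correct.

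One small point of care: the theorem as stated allows $L$ to be an arbitrary lattice in $\RR^m$, whereas your termination argument relies on $D = \prod_i \det(L_i)^2$ being a positive integer, which requires $L \subseteq \ZZ^m$. For a computational statement the input must in any case be given by rational (or algebraic) data, and one reduces to the integral case by clearing denominators; alternatively, one can bound the number of swaps directly by $\log_{4/3}(D_{\mathrm{init}}/D_{\mathrm{min}})$, where $D_{\mathrm{min}}$ is controlled by the shortest vector of $L$ and its sublattices, without invoking integrality. Either fix is routine, but as written the integrality step is an unstated hypothesis. Your concluding remark about controlling the bit complexity of the Gram--Schmidt data is the right thing to flag as the genuinely delicate part of the full proof.
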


Note that the LLL algorithm's input is a rational lattice, and the
rationality plays an important role in the running time analysis. In the
proof of Theorem~\ref{theorem:nf-coppersmith}, we must apply it to a lattice
whose basis vectors are not in $\QQ^m$; however, for our purposes using a
close rational approximation suffices.  Specifically, we simply approximate
the given basis $b_1,\dots,b_m$ with rational vectors $b_1',\dots,b_m'$. It
is easy to check that using a polynomial number of digits suffices to
approximate the determinant.  (The number of digits depends polynomially on
the dimension $m$ and the logarithmic sizes of the entries in the basis
vectors.) Then we find a short vector $\sum c_i b'_i$ with $c_i \in \ZZ$,
where the coefficients $c_i$ have only polynomially many digits because they
are the output of the polynomial-time algorithm LLL algorithm. If $b'_i$
approximates $b_i$ to enough digits, then $\sum c_i b_i$ will have
essentially the same length, and again a polynomial number of digits
suffices.  Strictly speaking, this process makes the approximation factor
slightly worse, but the difference is insignificant, and we could use a
better version of the LLL algorithm to achieve the same $2^{(m-1)/4}$ as in
the theorem statement.

\subsection{Polynomial lattices}
\label{subsec:polylat}

A lattice is a \emph{module} over the ring $\ZZ$ of integers.  In other
words, not only is it an abelian group under addition, but we can also
multiply lattice vectors by integers and thus take arbitrary integer
combinations of them.  More generally, a module for a ring $R$ is an
abelian group in which we can multiply by elements of $R$ (in a way
that satisfies the associative and distributive laws).  In other words,
an $R$-module is exactly like an $R$-vector space, except that $R$ is
not required to be a field, as it is in the definition of a vector
space.

The module $R^m$ with componentwise scalar multiplication is called a
\emph{free} $R$-module of rank $m$.  Every lattice is a free
$\ZZ$-module, and free $R$-modules will be the analogous structure for
the ring $R$.

For example, if $R$ is the polynomial ring $F[z]$ over a field $F$,
then we define a \emph{polynomial lattice} to be a free module over
$F[z]$ of finite rank.  A polynomial lattice will usually be generated
by a basis of vectors whose coefficients are polynomials in $z$.
Vectors in our polynomial lattice will be linear combinations of the
basis vectors (where the coefficients are also polynomials in $z$).

An appropriate definition of the length (i.e., degree) of such a lattice
vector is the maximum degree of its coordinates:
\begin{equation} \label{eq:normdef}
\deg_z (v_1(z), v_2(z), \ldots, v_m(z)) = \max_i \deg_z v_i(z).
\end{equation}
This defines a non-Archimedean norm.  In fact, for lattices with a norm
defined as above, it is possible to find the exact shortest vector in
polynomial time (see, for example, \cite{vonzurgathen}).

Lattices of polynomials have been well studied because of their
applications to the study of linear systems \cite{Kailath}.  There are
several notions of basis reduction for such lattices.  A basis is {\em
column-reduced} (or, as appropriate, {\em row-reduced}) if the degree
of the determinant of the lattice (i.e., of a basis matrix) is equal to
the sum of the degrees of its basis vectors.  Such bases always contain
a minimal vector for the lattice, and $m$-dimensional column reduction
can be carried out in $m^{\omega+o(1)} D$ field operations \cite{GJV},
where $\omega$ is the exponent of matrix multiplication and $D$ is the
greatest degree occurring in the original basis of the lattice.

In particular, for an $m$-dimensional lattice $L$ with the
norm~\eqref{eq:normdef}, the above algorithms are guaranteed to find a
nonzero vector $v$ for which
\begin{equation} \label{eqn:degdetL}
\deg v \le \frac{1}{m} \deg \det L,
\end{equation}
where $\det L$ denotes the determinant of a lattice basis.

\subsection{Finding short vectors under general non-Archimedean norms}
\label{subsec:findingshort}

The above algorithms are specialized to norms defined by \eqref{eq:normdef},
but there are other non-Archimedean norms, and we will need to use them in
the proof of Theorem~\ref{theorem:ff-coppersmith} in the function field
setting. In fact, we will show that for all non-Archimedean norms, one can
find a vector satisfying the equivalent of \eqref{eqn:degdetL} in a lattice
by solving a system of linear equations.  Solving this system may be less
efficient than a specialized algorithm, but it allows us to give a general
approach that will work in polynomial time for any norm.

Let $R = F[z]$ be a polynomial ring over a field $F$, and for $r \in R$
define
\[
|r| = C^{\deg_z(r)}
\]
for some arbitrary constant $C>1$; we take $|0|=0$ as a special case. Note
that $|z| = C$, and thus we can write $|r| = |z|^{\deg_z(r)}$.

Suppose we have any norm $|\cdot|$ on $R^m$ that satisfies the
following three properties:
\begin{enumerate}
\item For all $v \in R^m$, $|v| \ge 0$, and $|v|=0$ if and only if
    $v=0$.

\item For all $v,w \in R^m$, $|v+w| \le \max(|v|,|w|)$.

\item For all $v \in R^m$ and $r \in R$, $|rv| = |r||v|$.
\end{enumerate}
Note that taking
\[
|(v_1(z), v_2(z), \ldots, v_m(z)| = C^{\max_i \deg_z v_i(z)}
\]
defines such a norm, but the extra generality will prove useful in
Section~\ref{section:functionfield}.

Let $M \subseteq R^m$ be a submodule of rank $m$ (so the quotient
$F$-vector space $R^m/M$ is finite-dimensional), and let $d = \dim_F
(R^m/M)$.

\begin{lemma} \label{lemma:small}
For any $R$-basis $b_1,\dots,b_m$ of $R^m$, there exists a nonzero
vector $v \in M$ such that
\[
|v| \le \sqrt[m]{|b_1|\dots|b_m|} \, |z|^{d/m}.
\]
\end{lemma}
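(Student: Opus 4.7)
The approach is the non-Archimedean analogue of Minkowski's first theorem, replacing volume with $F$-dimension. For any $T > 0$ the ball $B_T = \{v \in R^m : |v| \le T\}$ is an $F$-subspace of $R^m$: it is closed under addition by property (2), and for any nonzero $\lambda \in F \subseteq R$ property (3) with $|\lambda| = C^{\deg_z \lambda} = 1$ gives $|\lambda v| = |v|$, so it is also closed under $F$-scalar multiplication. Hence if I exhibit an $F$-subspace $W \subseteq B_T$ with $\dim_F W > d = \dim_F(R^m/M)$ for $T$ equal to the stated bound, then the $F$-linear composition $W \hookrightarrow R^m \twoheadrightarrow R^m/M$ must have a nontrivial kernel, and any nonzero element of that kernel lies in $M \cap B_T$ and is the vector sought.

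To construct such a $W$, set $T = \sqrt[m]{|b_1|\cdots|b_m|}\,|z|^{d/m}$ and, for each $i$, let $k_i = \max\bigl(0,\ 1 + \lfloor \log_{|z|}(T/|b_i|) \rfloor\bigr)$. Let $W \subseteq R^m$ be the $F$-span of the vectors $z^j b_i$ for $1 \le i \le m$ and $0 \le j < k_i$. Since $b_1,\dots,b_m$ is an $R$-basis of $R^m$, these vectors are $F$-linearly independent, so $\dim_F W = \sum_i k_i$. A typical element of $W$ has the form $v = \sum_i c_i b_i$ with $\deg_z c_i < k_i$; by property (2), $|v| \le \max_i |c_i||b_i|$, and the choice of $k_i$ ensures that either $c_i = 0$ (when $k_i = 0$) or $|c_i||b_i| \le |z|^{k_i-1}|b_i| \le T$ (when $k_i \ge 1$). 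Thus $W \subseteq B_T$.

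It remains to verify $\sum_i k_i > d$. Writing $y_i = \log_{|z|}(T/|b_i|)$, the definition of $T$ gives
\[
\sum_i y_i \;=\; m \log_{|z|} T \;-\; \log_{|z|} \prod_i |b_i| \;=\; d.
\]
The elementary inequality $\max(0, 1 + \lfloor y \rfloor) > y$ holds for every real $y$: for $y \ge 0$ it is just the standard $1 + \lfloor y \rfloor > y$, and for $y < 0$ the left side is nonnegative while $y$ is negative. Summing over $i$ gives $\sum_i k_i > d$, and because $\sum_i k_i$ is an integer this forces $\dim_F W = \sum_i k_i \ge d + 1$. The pigeonhole observation from the first paragraph then produces the required $v$.

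The only mild subtlety is that some $|b_i|$ may exceed $T$, in which case $k_i = 0$ and $b_i$ contributes nothing to $W$; the strict inequality $\max(0, 1 + \lfloor y \rfloor) > y$ applied at a very negative $y_i$ is exactly what keeps the total dimension count above $d$ in such situations. Every other step is a direct manipulation of the three non-Archimedean axioms.
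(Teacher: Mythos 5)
Your proof is correct and follows essentially the same route as the paper's: construct the $F$-subspace of vectors $\sum_i r_i b_i$ with $\deg_z r_i$ bounded so that all elements have norm at most $T$, show its dimension exceeds $d$, and apply pigeonhole against the projection to $R^m/M$. Your $k_i$ is exactly the paper's $\lfloor c - n_i\rfloor + 1$ (with the max against zero made explicit), and $T = |z|^c$ with $c = (d + \sum_i n_i)/m$; your treatment of the $k_i = 0$ edge case is a welcome bit of extra care that the paper glosses over.
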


\begin{proof}
We will construct a nonzero vector satisfying $|v| \le q^c$ for some
constant $c$ to be determined, and then we will optimize the choice of
$c$.  Let $|b_i| = |z|^{n_i}$, and consider the space of polynomials
\[
V = \left\{\sum_i r_i b_i :
\text{$r_i \in R$ and $\deg_z r_i \le {c-n_i}$} \right\}.
\]
Every $v \in V$ satisfies $|v| \le |z|^c$, and $V$ is an $F$-vector
space. To compute its dimension, note that $r_i$ is determined by
$\lfloor c-n_i \rfloor + 1 > c-n_i$ coefficients. Because
$b_1,\dots,b_m$ is an $R$-basis, $\dim_F V > mc - \sum_i n_i$.

If we take $c = \big(d+\sum_i n_i\big)/m$, then $\dim_F V > d$.  Thus,
there exists a nonzero element $v$ of $V$ that maps to zero in the
$d$-dimensional quotient space $R^m/M$ and hence lies in $M$. It
satisfies
\[
|v| \le q^c = \sqrt[m]{|b_1|\dots|b_m|} \, |z|^{d/m},
\]
as desired.
\end{proof}

\begin{lemma} \label{lemma:findallsmall}
Under the hypothesis of Lemma~\ref{lemma:small}, a vector satisfying
\[
|v|\le \sqrt[m]{|b_1|\dots|b_m|} \, |z|^{d/m}
\]
can be found in polynomial time (given an $R$-basis of $M$).
\end{lemma}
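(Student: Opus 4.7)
The plan is to make the existence argument of Lemma~\ref{lemma:small} effective by setting up the pigeonhole step as an explicit linear system over $F$ and solving it by Gaussian elimination.

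To begin, compute $c = (d + \sum_i n_i)/m$, where $|b_i| = |z|^{n_i}$, and write down the natural $F$-basis
\[
B_V = \{\, z^j b_i : 1 \le i \le m,\ 0 \le j \le \lfloor c - n_i \rfloor\,\}
\]
of the vector space $V$ appearing in the proof of Lemma~\ref{lemma:small}; by that proof $|B_V| = \dim_F V > d$, and $|B_V|$ is polynomial in the input length. Next, from the given $R$-basis of $M$, assembled as the columns of an $m \times m$ matrix $A$ over $R = F[z]$, compute the Hermite normal form of $A$. Since $R$ is a PID, this is a classical polynomial-time computation, and it simultaneously yields an explicit $F$-basis of the $d$-dimensional quotient $R^m/M$ and a polynomial-time reduction map $\rho : R^m \to R^m/M$ implemented by ordinary polynomial division against the diagonal entries of the Hermite form.

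Now apply $\rho$ to every element of $B_V$, and record the coordinates of the resulting elements of $R^m/M$ in the chosen $F$-basis as the columns of a $d \times |B_V|$ matrix $\Phi$ over $F$. This matrix represents the $F$-linear map $V \to R^m/M$. Because $|B_V| > d$, Gaussian elimination produces a nonzero vector in $\ker \Phi$, i.e., coefficients $\alpha_{i,j} \in F$ with
\[
v = \sum_{i,j} \alpha_{i,j}\, z^j b_i \in M \setminus \{0\}.
\]
Since $v \in V$, we have $|v| \le |z|^c = \sqrt[m]{|b_1| \cdots |b_m|}\,|z|^{d/m}$, which is exactly the bound required.

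The only step that involves more than routine linear algebra over $F$ is the construction of $\rho$ and of an $F$-basis for $R^m/M$; but computing the Hermite form of an $m \times m$ matrix over $F[z]$ and reducing polynomial vectors modulo it are both well-understood polynomial-time operations, and all intermediate data remain of polynomial size. Hence the whole algorithm runs in polynomial time.
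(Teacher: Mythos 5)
Your proof is correct, but it takes a genuinely different route from the paper's. You make the membership test $v \in M$ effective by computing the Hermite normal form of the $R$-basis matrix of $M$, extracting from it an explicit $F$-basis of $R^m/M$ together with a polynomial-time reduction map $\rho$, and then finding a nonzero vector in the kernel of the resulting $F$-linear map $V \to R^m/M$. The paper instead never forms the quotient explicitly: writing $v = \sum_i r_i b_i = \sum_j s_j w_j$, it observes that $r = Ws$ where $W$ is the change-of-basis matrix from the $w_j$ to the $b_i$, and bounds the degrees of the unknown $s_j$ via the adjoint identity $\widetilde{W} r = \det(W)\, s$; with those degree bounds in hand, $r = Ws$ becomes a finite linear system over $F$ in the coefficients of the $r_i$ and $s_j$ together. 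Your approach is more modular, delegating the hard work to a standard HNF computation over $F[z]$; the paper's is more self-contained and avoids HNF at the cost of a slightly ad hoc degree bound. Both are polynomial time. One small imprecision in your write-up: reduction modulo the Hermite form is division against the full triangular columns (which updates the remaining coordinates at each step), not merely against the diagonal entries, though the diagonal entries do determine the representative degrees and the $F$-basis of the quotient; this does not affect the correctness of your argument.
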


\begin{proof}
In the notation of the proof of Lemma~\ref{lemma:small}, we will show
that we can find small coefficients $r_1,\dots,r_m \in R$ (not all
zero) such that $\sum_i r_i b_i$ is in $M$.  Suppose $w_1,\dots,w_m$ is
an $R$-basis of $M$. Then the elements of $M$ are those that can be
written as $\sum s_i w_i$ with $s_i \in R$. Given a polynomial bound
for the degrees of $s_1,\dots,s_m$, we could determine the coefficients
$r_i$ and $s_i$ by solving linear equations over $F$ for their
coefficients. To specify these equations, we write $w_1,\dots,w_m$ as
$R$-linear combinations of $b_1,\dots,b_m$.  Define the matrix $W$ over
$R$ by $w_j = \sum_i W_{ij} b_i$ for each $j$.  Then
\[
\sum_i r_i b_i = \sum_j s_j w_j
\]
amounts to $r = W s$, where $s$ and $r$ are the column vectors with
entries $s_i$ and $r_i$, respectively.

Thus, $s$ determines $r$ in a simple way, and all we need is to choose
$s_1,\dots,s_m$ so that setting $r = Ws$ yields $\deg_z r_i \le c-n_i$, with
$c$ and $n_i$ defined as in the proof of Lemma~\ref{lemma:small}.  It is not
difficult to bound the degrees of the polynomials $s_i$ as follows. Let
$\widetilde{W}$ be the adjoint matrix of $W$ (so $W \widetilde{W} = \det(W)
I$).  Then
\[
\widetilde{W} r = \det(W) s.
\]
It follows that for each $i$,
\[
\deg_z \det(W) + \deg_z s_i  \le
\max_{j} \big(\deg_z\widetilde{W}_{ij} + \deg_z r_j\big).
\]
However, the entries $\widetilde{W}_{ij}$ of $\widetilde{W}$ have
degree bounded by $m-1$ times the maximum degree of an entry of $W$
(because they are given by determinants of $(m-1) \times (m-1)$
submatrices of $W$).  Thus, $\deg_z s_i $ is polynomially bounded, and
we can locate a suitable vector $v$ by solving a system of polynomially
many linear equations over $F$.
\end{proof}

Note that for a rank $m$ submodule $M$ of $R^m$, the degree of the
determinant of a basis matrix $B$ for $M$ is the dimension of the
quotient $R^m/M$.  Thus, in Lemma~\ref{lemma:small}, if
$|b_1|=\dots=|b_m|=1$, then the norm of a minimal vector is bounded by
$|\det(B)|^{1/m}$.  The exponential approximation factor that occurs in
LLL lattice basis reduction does not occur here.

\section{Coppersmith's theorem}
\label{section:coppersmith}

We now review how Coppersmith's method works over the integers, as this
provides a template for the techniques we will apply later.  We will
follow the exposition of May \cite{may:survey}.

Let $f(x)$ be a monic univariate polynomial of degree $d$, and $N$ an
integer of potentially unknown factorization.  We wish to find all
small integers $w$ such that $\gcd(f(w),N)$ is large.

To do so, we will choose some positive integer $k$ (to be determined
later) and look at integer combinations of the polynomials $x^j f(x)^i
N^{k-i}$. If $B$ divides both $N$ and $f(w)$, then $B^k$ will divide
$w^j f(w)^i N^{k-i}$ and thus also any linear combination of such
polynomials.

Let
\[
Q(x) = \sum_{i,j} a_{i,j} x^j f(x)^i N^{k-i} = \sum_i q_i x^i,
\]
for some coefficients $a_{i,j}$ and $q_i$ to be determined. We will
choose $Q$ so that the small solutions to our original congruence
become actual solutions of $Q(x) = 0$ in the integers. This will allow
us to find $w$ by factoring $Q(x)$ over the rationals. The construction
of $Q$ tells us that
\begin{equation}
\label{eq:integers-vanish}
Q(w) \equiv 0 \pmod{B^k}.
\end{equation}
If in addition we have a lower bound $N^\beta$ on the size of $B$, and
we can show that
\begin{equation}
\label{eq:integers-Q<B}
|Q(w)| < N^{\beta k} \le  B^k,
\end{equation}
then $Q(w) = 0$ and we may find $w$ by factoring $Q$.  In fact, this
observation tells us that we can find \emph{all} such $w$ in this way.
A similar observation will appear in all of our proofs.

In the case of the integers, we introduce the bound $|w| < X$ on our
roots, and the triangle inequality tells us that
\begin{equation}
\label{eq:integers-q_i}
|Q(w)| \leq \sum_i |q_i| X^i.
\end{equation}
To finish the theorem, we will show that if $X$ is sufficiently small,
then we can choose $Q$ so that its coefficients $q_i$ satisfy
\begin{equation}
\label{eq:integers-q<Nb}
\sum_i |q_i|X^i < N^{\beta k}.
\end{equation}

We are now ready to prove Coppersmith's theorem for the integers.

\begin{proof}[Proof of Theorem \ref{theorem:coppersmith}]
Having outlined the general technique above, it remains to be shown
that we can construct a polynomial $Q(x)$ whose coefficients satisfy
the bound in \eqref{eq:integers-q<Nb}.

The polynomial $Q(x)$ will be a linear combination of the polynomials
\[
x^j f(x)^i N^{k-i} \quad \textup{for} \quad 0 \leq i < k \textup{ and } 0 \leq j
< d
\]
and
\[
x^j f(x)^k \quad \textup{for} \quad 0 \leq j < t.
\]

The right-hand side of \eqref{eq:integers-q_i} is the $\ell_1$ norm of
the vector of coefficients of the polynomial $Q(xX)$, which in turn
will be a linear combination of the polynomials $(xX)^j f(xX)^i
N^{k-i}$. Finding our desired $Q(x)$ is thus equivalent to finding a
suitably short vector in the lattice $L$ spanned by the coefficient
vectors of the polynomials $(xX)^j f(xX)^i N^{k-i}$.
Once we find this short vector, we can divide each coefficient by the power of $X$
introduced in the normalization to find the coefficients of $Q$, and test each of
the roots of $Q$ to see if it is a solution.

To compute the determinant of this lattice, we can order the basis
vectors by the degrees of the polynomials they represent to obtain an
upper triangular matrix whose determinant is the product of the terms
on the diagonal:
\[
\det(L) = \prod_{0 \leq i < dk+t} X^i \prod_{0 \leq j \leq k}
N^{dj} = X^{(dk+t-1)(dk+t)/2} N^{dk(k+1)/2}.
\]

Set $m=dk+t$.  We can use the LLL algorithm \cite{lll} to find a vector
$v$ whose $\ell_2$ norm is bounded by
\[
|v|_2 \leq 2^{(m-1)/4} \det(L)^{1/m}.
\]
By the Cauchy-Schwarz inequality, $|v|_1 \leq \sqrt{m}\, |v|_2$, and hence
whenever $|w| < X$,
\[
|Q(w)| \leq \sqrt{m} 2^{(m-1)/4} \det(L)^{1/m}.
\]
We assume $m \ge 7$, and use the weaker bound
\[
|Q(w)| \leq 2^{(m-1)/2} \det(L)^{1/m}.
\]

To prove inequality \eqref{eq:integers-Q<B}, we must show that
\[
2^{(m-1)/2} \left( X^{m(m-1)/2} N^{dk(k+1)/2}
\right)^{1/m} < N^{\beta k}.
\]
This inequality is equivalent to
\begin{equation}
\label{eq:integer-condition}
(2X)^{(m-1)/(2k)} N^{d(k+1)/(2m)}
 < N^{\beta}.
\end{equation}

Applying Lemma~\ref{lem:k-and-m} below with $\ell = \log_2 2X$ and $n =
\log_2 N$, we obtain parameters $k$ and $t$ such that
\eqref{eq:integer-condition} holds for
\[
2X < N^{{\beta^2}/{d} - \varepsilon}.
\]
To eliminate $\varepsilon$ from the statement of the theorem, take
$\varepsilon < \frac{1}{\log_2 N}$.  Then it suffices to take $X \le
\frac{1}{4} N^{\beta^2/d}$.  We can divide the interval $[-4X,4X]$ into four
intervals of width $2X$ and solve the problem for each interval by finding
solutions for the polynomials $f(x-3X)$, $f(x-X)$, $f(x+X)$, and $f(x+3X)$.
Thus, we achieve a bound of $N^{\beta^2/d}$, as desired.
\end{proof}

We end with a brief lemma that will tell us how to optimize our
parameters in equation \eqref{eq:integer-condition}.

\begin{lemma}
\label{lem:k-and-m} The inequality $\ell \frac{m-1}{2k} + nd\frac{k+1}{2m} <
n \beta$ is satisfied when $\ell < n \left( \frac{\beta^2}{d} - \varepsilon
\right)$, $m \ge
\max\left(\frac{2\beta}{\varepsilon},\frac{2d}{\beta}\right)$, and $k =
\left\lfloor \frac{\beta m}{d} - 1 \right\rfloor$.
\end{lemma}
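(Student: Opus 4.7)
The plan is to substitute the prescribed $k$ and $m$ into the inequality, bound each of the two summands separately by (just under) $n\beta/2$, and add. First I would record, from $k = \lfloor \beta m/d - 1\rfloor$, the simple two-sided estimate
\[
\frac{\beta m}{d} - 2 \;<\; k \;\le\; \frac{\beta m}{d} - 1.
\]
The upper bound yields $k+1 \le \beta m/d$, so immediately
\[
nd\,\frac{k+1}{2m} \;\le\; \frac{n\beta}{2}.
\]

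For the other summand, I would combine $\ell < n(\beta^2/d - \varepsilon)$ with the lower bound $k > \beta m/d - 2$ to obtain
\[
\ell\,\frac{m-1}{2k} \;<\; n\!\left(\frac{\beta^2}{d} - \varepsilon\right)\frac{m-1}{2(\beta m/d - 2)},
\]
and the remaining task is to show this is at most $n\beta/2$. Cross-multiplying and collecting terms reduces the question to a linear inequality in $m$ of the shape $\varepsilon m > 2\beta - \beta^2/d + \varepsilon$. Since the bound on $\ell$ is only nonvacuous when $\varepsilon < \beta^2/d$, this inequality is strictly weaker than $m \ge 2\beta/\varepsilon$ and is therefore guaranteed by the hypothesis $m \ge \lceil 2\beta/\varepsilon\rceil$ (which, in the same way, ensures $k \ge 1$ so that the denominators make sense). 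Adding the two bounds then delivers the strict inequality claimed in the lemma.

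The main difficulty is purely bookkeeping: the $\varepsilon$ slack in the bound on $\ell$ and the ceiling in the hypothesis on $m$ are designed precisely to absorb the lower-order corrections caused by the floor in $k$ and by the appearance of $m-1$ rather than $m$ in the numerator. There is no conceptual obstacle; the lemma is simply an optimized parameter choice, $k \approx \beta m/d$, that balances the two summands of \eqref{eq:integer-condition} near $n\beta/2$ each.
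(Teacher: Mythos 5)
Your proof is correct and follows essentially the same strategy as the paper: split the inequality into its two summands, bound the second by $\tfrac{n\beta}{2}$ using $k+1 \le \beta m/d$, and bound the first by combining $k > \beta m/d - 2$ with $m \ge 2\beta/\varepsilon$. The only difference is cosmetic --- the paper rearranges the first requirement to the form $\tfrac{k}{m-1} > \tfrac{\beta}{d} - \tfrac{\varepsilon}{\beta}$ and uses $2 \le \varepsilon m/\beta$, while you cross-multiply directly to the linear inequality $\varepsilon m > 2\beta + \varepsilon - \beta^2/d$; both reduce to the same arithmetic.
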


Note that for the application above, we must have $k \ge 1$ and $t = m-dk \ge
0$.  The hypotheses of the lemma achieve this.  Furthermore, we want $m$ and
$k$ to be polynomially bounded.  Without loss of generality, we can assume
that $N^{\beta^2/d} \ge 2$, and hence $\beta^2 \ge d/n$. Thus, as long as
$\varepsilon$ is not too small, $m$ and $k$ need not be too large.

Lemma~\ref{lem:k-and-m} amounts to optimizing how large $\ell$ can be. As
intuition, note that if we set the two terms $\ell \frac{m-1}{2k}$ and
$nd\frac{k+1}{2m}$ roughly equal to $\frac{n\beta}{2}$, then we have $\ell
m^2 \approx ndk^2 \approx n\beta m k$ and hence $\ell \approx n \beta^2/d$.
The proof amounts to making this precise.

\begin{proof}
It suffices to show that these values of $m$ and $k$ satisfy $
n\left(\frac{\beta^2}{d}-\varepsilon\right)\frac{m-1}{2k} <
\frac{n\beta}{2} $ and $ nd\frac{k+1}{2m} \le \frac{n\beta}{2} $.

The first inequality is equivalent to $\frac{k}{m-1} > \frac{\beta}{d}
- \frac{\varepsilon}{\beta}$. Similarly, the second is equivalent to
$\frac{k+1}{m} \le \frac{\beta}{d}$.  If we set $k = \left\lfloor
\frac{\beta m}{d} - 1 \right\rfloor$, then $\frac{k+1}{m} \le
\frac{\beta}{d}$, so the second inequality is satisfied.  If in
addition we take $m \ge \frac{2\beta}{\varepsilon}$, then
$\frac{\varepsilon m}{\beta} \ge 2$ and hence $k > \frac{\beta m}{d} -
2 \ge \frac{\beta m}{d} - \frac{\varepsilon m}{\beta}$.  It follows
that $k \frac{m}{m-1} > \frac{\beta m}{d} - \frac{\varepsilon
m}{\beta}$, which is equivalent to the first inequality.
\end{proof}

Note that improving the approximation factor for the length of the short
lattice vector that we find will only improve the constants and running time
of the theorem, and will not provide an asymptotic improvement to the bound
$N^{\beta^2/d}$ on $|w|$.

\section{Polynomials and Reed-Solomon list decoding}
\label{section:polycop}

In this section, we prove Theorem~\ref{theorem:polycop} using an
approach analogous to that of the previous section.  Guruswami and
Sudan's technique for list decoding of Reed-Solomon codes
\cite{guruswami-sudan} is similar in that it involves constructing a
bivariate polynomial that vanishes to high order at particular points.
To construct such a polynomial, they write each vanishing condition as
a set of linear equations on the coefficients of the polynomial under
construction. The linear equations can be solved to obtain the desired
polynomial, and the polynomial factored to obtain its roots.

Similarly, the polynomials used in Coppersmith's method are constructed so as
to vanish to high order, the condition ensured by equation
\eqref{eq:integers-vanish}.  The conceptual difference is that this condition
follows from the form of the lattice basis, rather than being imposed as
linear constraints.  With the right definition of lattice basis reduction in
the polynomial setting, we can emulate the proof from the integer case.

We regard $f(x)$ as a polynomial in $x$ with coefficients that are
polynomials in the variable $z$.  To prove
Theorem~\ref{theorem:polycop}, we would like to construct a polynomial
$Q(x)$ over $F[z]$ from the polynomials $x^j f(x)^i p(z)^{k-i}$.  If
$b(z)$ divides both $p(z)$ and $f(w(z))$, then $b(z)^k$ divides $w(z)^j
f(w(z))^i p(z)^{k-i}$ and thus also any linear combination of such
polynomials.

Instead of an integer combination of these polynomials, we will allow
coefficients that are polynomials in $z$. Let
\[
Q(x) = \sum_{i,j} a_{i,j}(z) x^j f(x)^i p(z)^{k-i} = \sum_i q_i(z) x^i.
\]

If we have an upper bound $\ell$ on the degree of our root $w(z)$, then the
degree of $Q(w(z))$ will be bounded by
\[
\deg_z Q(w(z)) \le \max_i \,\,(\deg_z q_i(z) + \ell i).
\]
If similarly we have a lower bound $n\beta$ on the degree of $b(z)$,
then if we know that both
\[
Q(w(z)) \equiv 0 \pmod{b(z)^k}
\]
and
\begin{equation}
\label{eq:polynomials-Q<B}
\deg_z Q(w(z)) < n \beta k \le k \deg_z b(z),
\end{equation}
then we may conclude that
\[
Q(w(z)) = 0.
\]

\subsection{Proof of Theorem~\ref{theorem:polycop}}

We will show how finding a short vector in a lattice of polynomials
will allow us to construct a polynomial $Q(x)$ satisfying
\eqref{eq:polynomials-Q<B}.

Let $\ell$ be the upper bound on the degree of the roots $w(z)$ we
would like to find.  Using the same idea to bound the length of the
vector as in the integer case, we will form a lattice of the
coefficient vectors of
\[
(z^\ell x)^j f(z^\ell x)^i p(z)^{k-i} \quad
\textup{for} \quad 0 \leq j < d \textup{ and } 0 \leq i < k
\]
and
\[
(z^\ell x)^j f(z^\ell x)^k \quad \textup{for} \quad 0 \leq j < t.
\]
As always, we view them as polynomials in powers of $x$ with
coefficients that are polynomials in $z$.
Once we find this short vector, we can divide each coefficient by the power of $z^\ell$
introduced in the normalization to find the coefficients of $Q$, and test each of
the roots of $Q$ to see if it is a solution.

Let $M$ be the $F[z]$-module
spanned by the coefficient vectors of these polynomials, with the
degree of a vector defined by \eqref{eq:normdef}.

The matrix of coefficient vectors of the basis is upper triangular, so its
determinant is the product of the diagonal entries.  Set $m = kd+t$.  Then
\begin{align*}
\deg \det M & = \ell \sum_{i=0}^{m-1} i + nd \sum_{i=0}^k i \\
&= \ell \frac{m(m-1)}{2} + nd\frac{k(k+1)}{2}.
\end{align*}

Since the dimension of our lattice is $m$, by
Theorem~\ref{lemma:findallsmall} we can find a vector of degree at most
\[
\frac{1}{m} \left( \ell \frac{m(m-1)}{2} + nd\frac{k(k+1)}{2} \right).
\]

To prove \eqref{eq:polynomials-Q<B}, we would like this bound to be less than
$\beta k n$. By Lemma \ref{lem:k-and-m}, we can achieve any $\ell \leq n
\left( \frac{\beta^2}{d} - \varepsilon \right)$. If we take $\varepsilon <
\frac{1}{n^2 d}$ then this becomes $\ell < \frac{\beta^2 n}{d}$, as desired,
because $\beta$ can be taken to have denominator $n$.

Note that we cannot achieve degree equal to $\beta^2n/d$ (as opposed to
strict inequality): for the equation $x^d \equiv 0 \pmod{p(z)^d}$, there are
infinitely many solutions $x = c \,p(z)$ if the field $F$ is infinite, so it
is impossible to list them all in polynomial time.

\subsection{Reed-Solomon list decoding and noisy polynomial interpolation}
\label{section:rsld}

A Reed-Solomon code is determined by evaluating a polynomial $w(z) \in
\FF_q[z]$ of degree at most $\ell$ at a collection of distinct points $(x_1,
\ldots, x_n)$ to obtain a codeword $(w(x_1), \ldots, w(x_n))$. In the
Reed-Solomon decoding problem, we are provided with $(y_1, \ldots, y_n)$,
where at most $e$ entries in the codeword have changed, and we wish to
recover $w(z)$ by finding a polynomial of degree at most $\ell$ that fits at
least $n-e$ points $(x_i,y_i)$. Guruswami and Sudan \cite{guruswami-sudan}
showed how to correct up to $e=n-\sqrt{n\ell}$ errors by providing a list of
all possible decodings.

In the noisy polynomial interpolation problem, at
each $x_i$ a set $\{y_{i1}, \ldots, y_{id} \}$ of values is
specified, and the goal is to find a low-degree polynomial passing
through a point from each set.  This problem has been proposed as a
cryptographic primitive, for example by Naor and Pinkas
\cite{naor-pinkas}, and studied by Bleichenbacher and Nguyen
\cite{bleichenbacher-nguyen}.

We can use Theorem \ref{theorem:polycop} to solve both problems, and in
particular recover the exact decoding rates of Guruswami and Sudan. The input
to our problem is a collection of points
\[
\{ (x_i, y_{ij}) : 1
\le i \le n, 1 \le j \le d \}.
\]
We set $p(z) = \prod_i (z-x_i)$, and we define a monic polynomial
$f(x)$ of degree $d$ in $x$ by
\[
f(x) = \sum_{i=1}^n \prod_{j=1}^d (x-y_{ij})
\prod_{\stackrel{\scriptstyle k=1}{\scriptstyle k \ne i}}^n
\frac{z-x_k}{x_i-x_k}.
\]
We have constructed $f(x)$ by interpolation so that $f(x) \equiv
\prod_j (x - y_{ij}) \pmod{(z - x_i)}$.  Thus, $f(y_{ij})=0$ whenever
$z = x_i$.

To correct $e$ errors, we seek a polynomial $w(z)$ of degree at most
$\ell$ such that for at least $n-e$ values of $i$, there exists a $j$
such that $w(x_i) = y_{ij}$.  In other words, $f(w(z))$ must be
divisible by at least $n-e$ factors $z-x_i$, which is equivalent to
\[
\deg_z \gcd(f(w(z)),p(z)) \ge n-e.
\]
By Theorem~\ref{theorem:polycop}, we can solve this problem in polynomial
time if $\ell < n(1-e/n)^2/d$ (since $\beta = 1-e/n$ in the notation of the
theorem).  That is equivalent to the Guruswami-Sudan bound $e < n - \sqrt{n
\ell d}$.

\subsection{Running time}

The Guruswami-Sudan algorithm consists of two parts: constructing the
polynomial $Q(x)$, and finding the roots of $Q(x)$ in $\FF_q[z]$.  In
this paper, we do not address the second part, but we improve the
running time of the first part, which has been the bottleneck in the
algorithm.

The time to construct $Q$ is dominated by the lattice basis reduction step,
which depends on the dimension $m$ of the lattice and the maximum degree $D$
of a coefficient polynomial.  In our construction, we have $D = O(nk)$.

Using the fastest row reduction algorithm (see
Section~\ref{subsec:polylat}), the running time is
\[
O\big(D m^{\omega+o(1)}\big)= O\big(nk m^{\omega + o(1)}\big).
\]
With cubic-time matrix multiplication we achieve $O(nkm^3)$, and with fast matrix multiplication~\cite{VW} we achieve $O(nkm^{2.3727})$.

The fastest previous algorithm proposed for this problem from Beelen and Brander~\cite{beelen-brander} runs in time $O(m^4 k n \log^2 n \log \log n)$.

\section{Number fields}
\label{section:numberfield}

\subsection{Background on number fields}

See \cite{lenstra:algorithms} for a beautiful introduction to
computational algebraic number theory, or \cite{cohen} for a more
comprehensive treatment.

Recall that number fields are finite extensions of the field $\QQ$ of
rational numbers.  Each number field $K$ is generated by some algebraic
number $\alpha$, and the elements of the number field are polynomials
in $\alpha$ with rational coefficients. If the minimal polynomial
$p(x)$ of $\alpha$ (the lowest-degree polynomial over $\QQ$, not
identically zero, for which $\alpha$ is a root) has degree $n$, then
every element of $K=\QQ(\alpha)$ will be a polynomial in $\alpha$ of
degree at most $n-1$.  In other words,
$$
\QQ(\alpha) = \{a_0 + a_1\alpha + \dots +
a_{n-1}\alpha^{n-1} : a_0,\dots,a_{n-1} \in \QQ\}.
$$
The degree of $K$ is defined to be $n$.  It is the dimension of $K$ as
a $\QQ$-vector space.

The minimal polynomial $p(x)$ must be irreducible over $\QQ$, and thus
it has $n$ distinct complex roots $\alpha_1,\dots,\alpha_n$ (one of
which is $\alpha$).  Not all of these roots will necessarily be in the
field $K = \QQ(\alpha)$.  For example, the field $\QQ(\sqrt[3]{2})$ is
contained in $\RR$ and thus does not contain either of the complex
roots of $x^3-2$.

For each $i$ from $1$ to $n$, we can define an embedding $\sigma_i$ of
$K$ into $\CC$ by mapping $\alpha$ to $\alpha_i$ and extending by
additivity and multiplicativity.  All embeddings into $\CC$ arise in
this way. If $p$ has $r_1$ real roots and $r_2$ pairs of complex
conjugate (non-real) roots, then there will be $r_1$ real embeddings
and $2r_2$ complex embeddings.

The Archimedean absolute values on $K$ are defined by
$$
|\gamma|_i = |\sigma_i(\gamma)|
$$
(where $|\cdot|$ on the right side is the familiar absolute value on
$\CC$, and $|\cdot|_i$ does not denote the $\ell_i$ norm). For each
$i$, this valuation has all the usual properties of the absolute value
on $\QQ$. These absolute values are not necessarily distinct, since
they coincide for complex conjugate roots of $p(x)$: if $\alpha_i =
\overline{\alpha_j}$, then $|\gamma|_i = |\gamma|_j$ for all $\gamma$.
Otherwise, the absolute values are all distinct.

The ring of algebraic integers $\OO_K$ in $K$ consists of all the
elements of $K$ that are roots of monic polynomials over $\ZZ$.  It is
the natural analogue of $\ZZ$ in $K$ (note that $\OO_\QQ = \ZZ$).  In
simple cases, $\OO_K$ may equal $\ZZ[\alpha]$, but that is not always
true. When $K = \QQ(\sqrt{5})$, we have $\OO_K = \ZZ[(1+\sqrt{5})/2]$,
and for some number fields the ring of integers cannot even be
generated by a single element.

The norm of an element $\gamma \in K$ is defined as the product
\[
N(\gamma) = \sigma_1(\gamma) \dots \sigma_n(\gamma)
\]
in $\CC$.  (In fact, $N(\gamma)$ is rational for $\gamma \in K$, and it
is integral for $\gamma \in \OO_K$.)  If $\gamma \in \OO_K$ and $\gamma
\ne 0$, then $|N(\gamma)| = |\OO_K/\gamma \OO_K|$. More generally, for
any nonzero ideal $I$ in $\OO_K$, we define its norm $N(I)$ to be
$|\OO_K/I|$.  The norm is multiplicative; i.e., $N(IJ) = N(I) N(J)$.

The norm is a natural measure of size for both ideals and individual
elements in $\OO_K$.  It might be tempting to use the norm as our
measure of the size of the roots of the polynomial in
Theorem~\ref{theorem:nf-coppersmith}.  However, that does not work,
because $\OO_K$ typically has infinitely many units (elements of
norm~$1$).  For example, the powers of $(1+\sqrt{5})/2$ are units in
$\ZZ[(1+\sqrt{5})/2]$, which means the equation $x^2 \equiv 0 \pmod{4}$
has infinitely many solutions of norm at most $N(4)^{1/2}=N(2)=4$,
namely the numbers $2((1+\sqrt{5})/2)^k$ for $k \in \ZZ$.  Thus,
bounding the norm alone is insufficient even to guarantee that there
will be only finitely many solutions, but bounding all the absolute
values suffices.

The ring $\OO_K$ has an integral basis $\omega_1, \ldots, \omega_n$
(i.e., a basis such that every element of $\OO_K$ can be expressed
uniquely in the form $\sum_i a_i \omega_i$ with $a_i \in \ZZ$). We
assume we are given such a basis, because finding one is
computationally difficult (see Theorem~4.4 in
\cite{lenstra:algorithms}).  Any reasonably explicit description of
$\OO_K$ will yield an integral basis. Fortunately, such a description
is known for many concrete examples of number fields, such as
cyclotomic fields.  Furthermore, if we are working with a fixed number
field, finding an integral basis for $\OO_K$ can be done with only a
fixed amount of preprocessing.  We also assume that ideals in $\OO_K$
are given in terms of integral bases. It is not difficult to convert
any other description of an ideal (such as generators over $\OO_K$) to
an integral basis.

If we do not know the full ring $\OO_K$ of integers, we could
nevertheless work with an order in $K$, i.e., a finite-index subring of
$\OO_K$. Everything we need works just as well for orders, with one
exception, namely that the norm is no longer multiplicative for ideals.
Fortunately, it remains multiplicative for invertible ideals (see
Proposition~4.6.8 in \cite{cohen}), and Coppersmith's theorem
generalizes to invertible ideals.  Specifically, we can find small
roots of polynomial equations modulo an invertible ideal $I$, or modulo
any invertible ideal $B$ that contains $I$ and satisfies $N(B) \ge
N(I)^\beta$.

Polynomials over number fields can be factored in polynomial time
\cite{lenstra:factoring}.

\subsubsection{Modules and canonical embeddings}

The analogue of a lattice for $\OO_K$ is a finitely generated
$\OO_K$-submodule of the $r$-dimensional $K$-vector space $K^r$. Recall
that an $\OO_K$-submodule is a non-empty subset that is closed under
addition and under multiplication by any element in $\OO_K$.

Unlike the case of $\ZZ$-lattices, $\OO_K$-lattices may not have bases
over $\OO_K$.  However, an $\OO_K$-lattice $\Lambda$ always has a
pseudo-basis, i.e., a collection of vectors $v_1,\dots,v_s \in \Lambda$
and ideals $I_1,\dots,I_s \subseteq \OO_K$ such that
\[
\Lambda = I_1 v_1 + \dots + I_s v_s.
\]
The key difference from $\ZZ$ is that the ideals may not be principal
(i.e., they may not simply be the multiples of single elements of
$\OO_K$).

A natural approach to finding a short vector in an $\OO_K$-lattice would be
to find an algorithm to reduce a pseudo-basis.  Fieker and Pohst
\cite{fieker-pohst} developed an $\OO_K$-analogue of the LLL lattice basis
reduction algorithm, but they were unable to prove that their algorithm runs
in polynomial time.  More recently, Fieker and Stehl\'e \cite{fieker-stehle}
have given a polynomial-time algorithm to find a reduced pseudo-basis in an
$\OO_K$-module.  Their algorithm runs in two parts.  The first applies LLL to
an embedding of the $\OO_K$-module as a $\ZZ$-lattice to find a full-rank set
of short module elements, and the second uses this collection of module
elements to reduce the pseudo-basis.

As our application only requires finding a short vector in the module,
we do not need the second step of the Fieker-Stehl\'e algorithm.  The
remainder of this section describes how to use LLL to find a short
vector in an $\OO_K$-lattice.

Although $\OO_K$-lattices are an algebraic analogue of $\ZZ$-lattices,
their geometry is not as easy to see directly from the definition.  It
might seem natural simply to use one of the absolute values to define
the $\ell_2$ norm for vectors, but that breaks the symmetry between
them.  Instead, it is important to treat each absolute value on an
equal footing, and the canonical embedding (defined below) allows us to
do so.

We will describe the embedding in several steps.  First, we embed
$\OO_K$ itself as an $n$-dimensional lattice in $\RR^{r_1} \oplus
\CC^{2r_2}$ by mapping $\gamma \in \OO_K$ to $
(\sigma_1(\gamma),\dots,\sigma_n(\gamma)). $ An integral basis
$\omega_1,\dots,\omega_n$ of $\OO_K$ is mapped to the rows of the
matrix
\[
\sigma(\omega)=
\begin{pmatrix}
\sigma_1(\omega_1) & \sigma_2(\omega_1) & \cdots & \sigma_n(\omega_1) \\
\sigma_1(\omega_2) & \ddots & \ & \sigma_n(\omega_2)\\
\vdots && \ddots &  \vdots \\
\sigma_1(\omega_n) & \sigma_2(\omega_n) & \cdots & \sigma_n(\omega_n)
\end{pmatrix},
\]
so $\OO_K$ is mapped to the $\ZZ$-linear combinations of the rows.

The discriminant $\Delta_K$ of $K$ is defined by
\[
\Delta_K = \det \sigma(\omega)^2.
\]
It is an integer that measures the size of the ring of integers in $K$.

The canonical embedding of the principal ideal generated by an element
$\gamma$ is generated by the rows of the matrix product
\[
\begin{pmatrix}
\sigma_1(\omega_1) & \sigma_2(\omega_1) & \cdots & \sigma_n(\omega_1) \\
\sigma_1(\omega_2) & \ddots & \ & \sigma_n(\omega_2)\\
\vdots && \ddots &  \vdots \\
\sigma_1(\omega_n) & \sigma_2(\omega_n) & \cdots & \sigma_n(\omega_n)
\end{pmatrix}
\begin{pmatrix}
\sigma_1(\gamma) \\
 & \sigma_2(\gamma) \\
 && \ddots \\
 &&& \sigma_n(\gamma)
\end{pmatrix}.
\]

More generally, suppose we have an ideal $B$ generated by an integral
basis $b_1, \ldots, b_n$. Let $M_B$ be the matrix defined by
\[
b_i = \sum_j \big(M_B\big)_{ij} \omega_j.
\]
The canonical embedding of $B$ is generated by the rows of
\[
\sigma(b) =
\begin{pmatrix}
\sigma_1(b_1) & \sigma_2(b_1) & \cdots & \sigma_n(b_1) \\
\sigma_1(b_2) & \ddots & \ & \sigma_n(b_2)\\
\vdots && \ddots&  \vdots \\
\sigma_1(b_n) & \sigma_2(b_n) & \cdots & \sigma_n(b_n)
\end{pmatrix}
=
M_B
\begin{pmatrix}
\sigma_1(\omega_1) & \sigma_2(\omega_1) & \cdots & \sigma_n(\omega_1) \\
\sigma_1(\omega_2) & \ddots & \ & \sigma_n(\omega_2)\\
\vdots && \ddots&  \vdots \\
\sigma_1(\omega_n) & \sigma_2(\omega_n) & \cdots & \sigma_n(\omega_n)
\end{pmatrix}.
\]
Note that the absolute value of the determinant of $\sigma(b)$ equals
$|\det M_B| \sqrt{|\Delta_K|}$, and $|\det M_B| = |\OO_K/B| = N(B)$.

Finally, we can easily extend the canonical embedding from $\OO_K$ to
$\OO_K^{\,r}$ by embedding each of the $r$ coordinates independently.
Given a pseudo-basis $v_1,\dots,v_r$ with corresponding ideals
$I_1,\dots,I_r$, the canonical embedding of the lattice is generated by
the rows of the block matrix whose $ij$ block of size $n \times n$ is
equal to
\[
M_{I_i} \sigma(\omega) \begin{pmatrix}
\sigma_1(v_{ij}) \\
 & \sigma_2(v_{ij}) \\
 && \ddots \\
 &&& \sigma_n(v_{ij})
\end{pmatrix},
\]
where $v_{ij}$ is the $j$-th component of $v_i$.

The inner product on $\RR^{r_1} \oplus \CC^{2r_2}$ is given by the
usual dot product on $\RR$ and the Hermitian inner product on $\CC$
(i.e., $\langle x,y \rangle = x \overline{y}$ for $x,y \in \CC$). Thus,
it is positive definite.

The canonical embedding's image lies within an $n$-dimensional real
subspace, because the complex embeddings come in conjugate pairs.  In
fact, we can transform it into a simple real embedding. To do so,
consider the $r_2$ pairs of complex embeddings.  For each pair
$(\sigma_j(\gamma), \sigma_k(\gamma))$ of complex embeddings that are
conjugates of each other, we can map the pair $(\sigma_j(\gamma),
\sigma_k(\gamma))$ to $(\sqrt{2} \Re(\sigma_j(\gamma)),
\sqrt{2}\Im(\sigma_j(\gamma)))$. The reason for the factor of
$\sqrt{2}$ is to ensure that the inner product is preserved.
Furthermore, the absolute value of the determinant is preserved.

Once we have a real embedding of our $\OO_K$-lattice, we can apply the
LLL algorithm to find a short vector in the real embedded lattice,
which will correspond to a short vector in the original
$\OO_K$-lattice.  Unfortunately, using LLL in the canonical embedding
does not preserve the $\OO_K$-structure, so it does not produce a
reduced pseudo-basis over $\OO_K$, but a short vector is sufficient for
our purposes here.

\subsection{Proof of Theorem~\ref{theorem:nf-coppersmith}}

The following lemma is the analogue of the statement over the integers
that a multiple of $n$ that is strictly less than $n$ in absolute value
must be zero.

\begin{lemma}
\label{lem:ideal} For a nonzero ideal $I$ in $\OO_K$ and an element
$\gamma \in I$, if $|N(\gamma)| < N(I)$ then $\gamma = 0$.
\end{lemma}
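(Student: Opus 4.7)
The plan is to argue by contrapositive: assume $\gamma \neq 0$, and deduce $|N(\gamma)| \geq N(I)$. The key tool is the multiplicativity/tower property of indices of subgroups, combined with the identity $|N(\gamma)| = |\OO_K/\gamma\OO_K|$ recalled in the preliminaries just above the lemma.

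First I would observe that since $\gamma \in I$, the principal ideal $\gamma\OO_K$ is contained in $I$, giving a chain $\gamma\OO_K \subseteq I \subseteq \OO_K$ of subgroups of the finitely generated free abelian group $\OO_K$. When $\gamma \neq 0$, the quotient $\OO_K/\gamma\OO_K$ is finite (of size $|N(\gamma)|$, by the recalled identity), and the tower law for indices yields
\[
|\OO_K/\gamma\OO_K| \;=\; |\OO_K/I|\cdot |I/\gamma\OO_K|.
\]
In particular $N(I) = |\OO_K/I|$ divides $|N(\gamma)|$, so $N(I) \leq |N(\gamma)|$. Contrapositively, $|N(\gamma)| < N(I)$ forces $\gamma = 0$.

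The proof is essentially a one-line counting argument, so there is no serious obstacle; the only thing to be careful about is invoking the correct interpretation of $|N(\gamma)|$ as the index $[\OO_K : \gamma\OO_K]$ (rather than, e.g., the product of complex embeddings, which could be negative), and noting that this equals $N(\gamma\OO_K)$ in the ideal-norm sense. One could alternatively phrase the same argument via unique factorization of ideals in the Dedekind domain $\OO_K$: $\gamma\OO_K \subseteq I$ means $I \mid \gamma\OO_K$, so $\gamma\OO_K = IJ$ for some integral ideal $J$, and multiplicativity of the ideal norm gives $|N(\gamma)| = N(I)N(J) \geq N(I)$. Either route works, but the subgroup-index version is slightly more elementary and does not even require the Dedekind property.
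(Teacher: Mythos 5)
Your argument is correct and is essentially the paper's: both observe $\gamma\OO_K \subseteq I$ and conclude $N(I) = |\OO_K/I| \le |\OO_K/\gamma\OO_K| = |N(\gamma)|$ for $\gamma \ne 0$. You phrase the inequality via the tower law for subgroup indices (which even gives divisibility), while the paper states the inequality directly from the containment, but this is the same counting argument.
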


\begin{proof}
Consider the principal ideal $\gamma\OO_K$ generated by a nonzero
element $\gamma$ of $I$. The ideal $I$ contains $\gamma \OO_K$, and
thus $|\OO_K/I| \le |\OO_K/\gamma\OO_K|$.  Because $N(I) = |\OO_K/I|$
and $|N(\gamma)| = |\OO_K/\gamma\OO_K|$, we have $|N(\gamma)| \ge
N(I)$, as desired.
\end{proof}

\begin{proof}[Proof of Theorem \ref{theorem:nf-coppersmith}]
As in the previous proofs, we will construct a polynomial $Q(x)$ in the
$\OO_K$-module generated by
\[
x^j f(x)^i I^{k-i} \quad \textup{for} \quad 0 \leq i < k \textup{ and } 0 \leq j < d
\]
and
\[
x^j f(x)^k \quad \textup{for} \quad 0 \leq j < t.
\]
Note that because of the ideals $I^{k-i}$, this is really a
pseudo-basis rather than a basis.

Let $m = dk+t$.  To represent this module, we will write down an $nm
\times nm$ matrix whose rows are a $\ZZ$-basis for a weighted version
of the module's canonical embedding.  Finding a short vector in this
lattice will correspond to finding a $Q$ that satisfies our bounds.

Our lattice is constructed much as before, except that in place of a
single entry for each coefficient of $x^j f(x)^i I^{k-i}$, we will have
an $n \times n$ block matrix.  Let $f_{sij}$ be the coefficient of
$x^s$ in $x^j f(x)^i$.  Then we form the ideal $f_{sij} I^{k-i}$, which
has an integral basis $b_1, \ldots, b_n$.  We incorporate the bounds
$\lambda_i$ on each absolute value into our canonical embedding for the
$s$-th coefficient of $x^j f(x)^i I^{k-i}$ by using
\[
\begin{pmatrix}
\lambda_1^s \sigma_1(b_1) & \lambda_2^s  \sigma_2(b_1) & \cdots & \lambda_n^s  \sigma_n(b_1) \\
\lambda_1^s  \sigma_1(b_2) & \ddots & \ & \lambda_n^s  \sigma_n(b_2)\\
\vdots && \ddots &  \vdots \\
\lambda_1^s  \sigma_1(b_n) & \lambda_2^s  \sigma_2(b_n) & \cdots & \lambda_n^s  \sigma_n(b_n)
\end{pmatrix}.
\]
This is equal to the product of the matrix with
$\lambda_1^s,\dots,\lambda_n^s$ on the diagonal with the canonical
embedding $\sigma(b)$, so the absolute value of the determinant of the
block is
\[
\lambda_1^s \dots \lambda_n^s \sqrt{|\Delta_K|}\,|N(f_{sij})|\,N(I)^{k-i}.
\]

Now consider a vector $v$ in this lattice and the polynomial $Q(x) =
\sum_j q_j x^j$ that it represents.  If $|w|_i < \lambda_i$ for all
$i$, then we can bound $|N(Q(w))|$ using the $\ell_1$ norm by applying
the arithmetic mean-geometric mean inequality. We have
\[
|N(Q(w))| = \prod_i \bigg| \sum_j q_{j} w^j \bigg|_i,
\]
and hence
\begin{align*}
|N(Q(w))|^{1/n} & \leq \frac{1}{n} \sum_i \bigg| \sum_j q_{j} w^j \bigg|_i \\
& \leq \frac{1}{n} \sum_i \sum_j |q_{j}|_i \lambda_i^j.
\end{align*}
Thus,
\[
|N(Q(w))| \leq \left( \frac{1}{n} |v|_1 \right)^n.
\]

As in the integer case, LLL produces a nonzero vector $v$ whose
$\ell_1$ norm is bounded by
\[
\sum_i \sum_j |v_i|_j \leq \sqrt{nm} 2^{(nm-1)/4} |\det(M)|^{\frac{1}{nm}}.
\]
Note that here, $|v_i|_j$ denotes the $j$-th number field norm applied
to the $i$-th entry of $v$.

Now it remains to compute the determinant of our weighted canonical
embedding. The lattice basis we produced in our construction is block
upper triangular, so the determinant is the product of the blocks on
the diagonal.  Letting $\prod_i \lambda_i = X$, we get
\begin{align*}
|\det M| & = \prod_{0 \leq i < m} \big(X^i \sqrt{|\Delta_K|}\big)
\prod_{0 \leq j \leq k}N(I)^{dj} \\
&= \sqrt{|\Delta_K|}^{\,m}
X^{{m(m-1)}/{2}}N(I)^{{dk(k+1)}/{2}}.
\end{align*}

Thus, we have
$$
|v|_1 < \sqrt{nm} 2^{(nm-1)/4} \sqrt{|\Delta_K|}^{\, \frac{1}{n}}
\left( X^{{m(m-1)}/{2}}N(I)^{{dk(k+1)}/{2}} \right)^{\frac{1}{nm}}.
$$
Recall that if $|w|_i < \lambda_i$ for all $i$, then
$$
|N(Q(w))| \le \frac{1}{n^n} |v|_1^n.
$$

We will compute a $c$ so that
\[
\left(\frac{1}{n^n} \left(\sqrt{nm} 2^{(nm-1)/4}\right)^n
\sqrt{|\Delta_K|}\right)^{\frac{2}{m-1}} < c.
\]
Then by the same analysis as in the proof of
Theorem~\ref{theorem:coppersmith}, we can prove the theorem with a
bound of
\[
\frac{1}{c} N(I)^{{\beta^2}/{d}-\varepsilon}
\]
on the product $\prod_i \lambda_i$. A simple asymptotic analysis shows
that we can take $c = (2+o(1))^{n^2/2}$ as $m \to \infty$. Thus, we
achieve a bound of
\[
{(2+o(1))^{-n^2/2}} N(I)^{\beta^2/d - \varepsilon}.
\]
As before, we can take $\varepsilon = 1/\log N(I)$ to achieve in fact
${(2+o(1))^{-n^2/2}} N(I)^{\beta^2/d}$.

Note that so far, everything runs in polynomial time, with no
exponential dependence on $n$.  Unfortunately, removing the factor of
${(2+o(1))^{-n^2/2}}$ is computationally expensive.  We can use the
same trick as in Theorem~\ref{theorem:coppersmith}.  In the canonical
embedding of $\OO_K$, the region we would like to cover is a box of
dimensions $2\lambda_1 \times \dots \times 2 \lambda_n$ (the factor of
$2$ comes from including positive and negative signs).  The proof so
far shows that we can deal with a box that is a factor of
$(2+o(1))^{-n/2}$ smaller in each coordinate.  We can cover the large
box with ${(2+o(1))^{n^2/2}}$ of the smaller ones and compute the
solutions in each smaller box in polynomial time, but the total running
time becomes exponential in $n^2$.
\end{proof}

\subsection{Solving the closest vector problem in ideal lattices}
\label{subsec:bdd}

In \cite{peikert}, Peikert and Rosen proposed using the closest vector
problem for ideal lattices as a hard problem for use in constructing
lattice-based cryptosystems.  In \cite{LPR}, Lyubashevsky, Peikert, and
Regev gave hardness reductions for such cryptosystems via the
bounded-distance decoding problem, defined for the $\ell_\infty$ norm
as follows.  Given an ideal $I$ in $\OO_K$, a distance $\delta$, and an
element $y \in K$, find $y+w \in I$ such that $|w|_\infty < \delta$,
where $|\cdot|_\infty$ denotes the $\ell_\infty$ norm on $K$ (i.e.,
the maximum of the $n$ absolute values).

If $y \in \OO_K$, then we can define $f(x) = x+y$ and find the roots
$w$ of $f(x) \equiv 0 \pmod{I}$ satisfying
\[
|w|_\infty < (2+o(1))^{-n/2} N(I)^{1/n}.
\]
This amounts to taking $d=1$, $\beta=1$, and $\lambda_1 = \dots =
\lambda_n = (2+o(1))^{-n/2} N(I)^{1/n}$. Because we are using the
$\ell_\infty$ norm, the minimal nonzero norm of $I$ is at most
$\big(\sqrt{|\Delta_K|} N(I)\big)^{1/n}$. Thus, our algorithm can
handle distances $\delta$ less than $(2+o(1))^{-n/2}
|\Delta_K|^{-1/(2n)}$ times the minimal norm of $I$. (Of course, this
is somewhat worse than using LLL directly.) Note also that if $y
\not\in \OO_K$, then we can rescale $y$ and $I$ by a positive integer
to reduce to the previous case.

If the $(2+o(1))^{-n^2/2}$ could be improved to
$2^{-n}\sqrt{|\Delta_K|}$, then we could solve the bounded-distance
decoding problem up to half the minimal distance, by the same argument
as above with $\lambda_1 = \dots = \lambda_n =  |\Delta_K|^{1/(2n)}
N(I)^{1/n}/2$.  This suggests that it will be difficult to remove the
multiplicative factor entirely.

\section{Function Fields}
\label{section:functionfield}

Much as number fields are finite extensions of $\QQ$, \emph{function
fields} are finite extensions of the field $\FF_q(x)$ of rational
functions over a finite field $\FF_q$.  They arise naturally from
algebraic curves over $\FF_q$, as the field of rational functions on
the curve. For example, for a plane curve defined by the polynomial
equation $f(x,y)=0$, the function field will be $\FF_q(x,y)/(f(x,y))$
(i.e., rational functions of $x$ and $y$, where the variables satisfy
$f(x,y)=0$).  See \cite{stichtenoth} and \cite{rosen} for background on
function fields, and \cite{lorenzini} for a beautiful account of the
analogies between number fields and function fields.

More generally, let $\XX$ be an algebraic curve over $\FF_q$.
Specifically, it must be a smooth, projective curve that remains
irreducible over the algebraic closure of $\FF_q$.  Our function field
$K$ will be the field of rational functions on $\XX$ defined over
$\FF_q$.  (Note that we are assuming $\FF_q$ is the full field of
constants in $K$; in other words, each element of $K$ is either in
$\FF_q$ or transcendental over $\FF_q$.)

Let $\XX(\FF_q)$ be the set of points on $\XX$ with coordinates in
$\FF_q$.  Every point $p \in \XX(\FF_q)$ gives a valuation $v_p$ on
$K$, which measures the order of vanishing at that point. Poles are
treated as zeros of negative order.  The corresponding absolute value
on $K$ is defined by
\[
|f|_p = q^{-v_p(f)}.
\]
(Note that this is not the $\ell_p$ norm on a vector; in this section, the
$\ell_p$ norm will not be used.) In other words, high-order zeros make a
function small, while poles make it large. Not every absolute value on $K$ is
of this form---there is a slight generalization that corresponds to points
defined over finite extensions of $\FF_q$ (more precisely, Galois orbits of
such points). For our purposes we can restrict our attention to the absolute
values defined above, but in fact all our results generalize naturally to
places of degree greater than $1$.

In the number field case, the Archimedean absolute values (which come from
the complex embeddings) play a special role, although there are infinitely
many non-Archimedean absolute values as well, namely the $p$-adic absolute
values measuring divisibility by primes. In the function field case, there
are no Archimedean absolute values, and any set of absolute values can play
that role.

Let $S$ be a nonempty subset of $\XX(\FF_q)$, and let $\OO_S$ be the
subring of $K$ consisting of all rational functions whose poles are
confined to the set $S$.  The ring $\OO_S$ is analogous to the ring of
algebraic integers in a number field; in this analogy, the condition of
having no poles outside $S$ amounts to the condition that an algebraic
integer has no primes in its denominator, because the valuations from
points outside $S$ correspond to the $p$-adic valuations.

For example, if $\XX$ is the projective line (i.e., the ordinary line
completed with a point at infinity), then $K$ is simply the field
$\FF_q(z)$ of rational functions in one variable.  If we let $S =
\{\infty\}$ be the set consisting solely of the point at infinity, then
$\OO_S$ is the set of rational functions that have poles only at
infinity.  In other words, it is the polynomial ring $\FF_q[z]$.  (A
polynomial of degree $d$ has a pole of order $d$ at infinity.)

The norm of an element $f \in \OO_S$ is defined by
\[
N(f) = \prod_{p \in S} |f|_p,
\]
and the norm of a nonzero ideal $I$ is defined by $N(I) = |\OO_S/I|$.
As in the number field case, the norm of the principal ideal $f \OO_S$
is $N(f)$.

\subsection{Background on algebraic-geometric codes}

Algebraic-geometric codes are a natural generalization of Reed-Solomon
codes.  They are of great importance in coding theory, because for
certain finite fields they beat the Gilbert-Varshamov bound (which is
the performance of a random code, and which aside from
algebraic-geometric codes is the best bound known).  See Section~8.4 in
\cite{stichtenoth}.

To define an algebraic-geometric code on $\XX$, we specify for each point in
$S$ the maximum allowable order of a pole there, and we allow no poles
outside of $S$. The space of functions satisfying these restrictions is a
finite-dimensional $\FF_q$-vector space, and we can produce an
error-correcting code by looking at the evaluations of these functions at a
fixed set of points (disjoint from $S$).

This is typically described using the language of algebraic geometry. A
\emph{divisor} $D$ on $\XX$ is a formal $\ZZ$-linear combination of finitely
many points on $\XX$; the support of $D$ is the set of points with nonzero
coefficients. (We will restrict our attention to divisors supported at points
in $\XX(\FF_q)$.)  The divisor $D$ is called \emph{effective}, denoted $D
\succeq 0$, if all its coefficients are nonnegative.  For every function $f
\in K^*$, the \emph{principal divisor} $(f)$ is the sum of the zeros and
poles of $f$, with their orders as coefficients.  (The identically zero
function does not define a principal divisor, since it has a zero of infinite
order at every point.)  The \emph{degree} $\deg(D)$ of $D$ is the sum of its
coefficients, and the degree of a principal divisor is always zero.

Given a divisor $D$, the Riemann-Roch space $\LL(D)$ is defined by
\[
\LL(D) = \{0\} \cup \{f \in K^* : (f) + D \succeq 0\}.
\]
In other words, if the coefficient of $p$ in $D$ is $k$, then $f$ can
have a pole of order at most $k$ at the point $p$. The space $\LL(D)$
is a finite-dimensional $\FF_q$-vector space, and the famous
Riemann-Roch theorem describes its dimension:
\[
\dim_{\FF_q} \LL(D) = \deg(D) - g + 1 + \dim_{\FF_q} \LL(W-D),
\]
where $g$ is a nonnegative integer called the \emph{genus} of the curve
and $W$ is a particular divisor called the \emph{canonical divisor}. It
follows that $\dim_{\FF_q} \LL(D) \ge \deg(D) - g + 1$, and equality
holds if $\deg(D) > 2g-2$.

To translate the definition of an algebraic-geometric code to this
language, let $D$ be the divisor with support in $S$ whose coefficients
specify the allowed order of a pole at each point, and let
$p_1,\dots,p_n$ be distinct points in $\XX(\FF_q)$ but not in $S$. Then
the corresponding algebraic-geometric code consists of the codewords
$(w(p_1),\dots,w(p_n))$ for $w \in \LL(D)$.

In the case of the projective line, let $S = \{\infty\}$, so $\OO_S =
\FF_q[z]$, and let $D = d \infty$.  Then $\LL(D)$ is the space of
polynomials in $\FF_q[z]$ of degree at most $d$.  Thus, this
construction yields Reed-Solomon codes as a special case.

Theorem~\ref{theorem:ff-coppersmith} corresponds to list decoding of
algebraic-geometric codes in much the same way as
Theorem~\ref{theorem:polycop} does for Reed-Solomon codes.  The
evaluation points $p_1,\dots,p_n$ correspond to prime ideals
$P_1,\dots,P_n$ in $\OO_S$, where $P_i$ consists of the functions
vanishing at $p_i$, and we can let $I$ be the product $P_1\dots P_n$.
If the received codeword is $(y_1,\dots,y_n) \in \FF_q^n$, then we
define the linear polynomial $f$ so that $f(x) \equiv x-y_i \pmod{P_i}$
for all $i$.  (The Chinese remainder theorem lets us solve this
interpolation problem.)  Thus, for $w \in \OO_S$, $f(w)$ is in the
ideal $P_i$ if and only if $w(p_i)=y_i$.  We have $N(I) = q^n$, and
$\gcd(f(w) \OO_S,I)$ is divisible by $P_i$ exactly when $w(p_i)=y_i$.
Therefore the inequality
\[
N(\gcd(f(w)\OO_S,I)) \ge N(I)^\beta
\]
simply means that $w(p_i)=y_i$ for at least $\beta n$ values of $i$.
Thus, Theorem~\ref{theorem:ff-coppersmith} solves the list decoding
problem.

\subsection{Proof of Theorem~\ref{theorem:ff-coppersmith}}

The first obstacle to proving Theorem~\ref{theorem:ff-coppersmith} is
identifying the right sort of lattice to consider.  For comparison, in the
number field case, we use the canonical embedding to reduce from
$\OO_K$-lattices to $\ZZ$-lattices, because $\ZZ$ is a principal ideal domain
and hence $\ZZ$-lattices are structurally simpler. In the function field
case, $\FF_q[z]$-lattices are the analogous structures, but $\FF_q[z]$ has
infinitely many embeddings as a subring of $\OO_S$, while $\ZZ$ has only one
embedding into $\OO_K$.  We must identify an embedding of a special sort,
namely one that treats all the absolute values from points in $S$
evenhandedly. Lemma~\ref{lemma:equalval} accomplishes this.

One we have identified a suitable embedding of $\FF_q[z]$ into $\OO_S$, we
are faced with two more difficulties.  The first is that we must consider
lattices with more general non-Archimedean norms than those studied in the
literature, because we must take into account all the absolute values from
$S$, and the known algorithms for basis reduction no longer apply. However,
we can prove the needed results in our more general framework
(Lemma~\ref{lemma:findallsmall}).

The final difficulty comes from attempting to control the zeros and poles of
functions in $K$.  In the simplest function field, namely the rational
function field $\FF_q(z)$, we can specify the (finitely many) zeros and poles
arbitrarily, subject to just one constraint, that the total order of all the
zeros must equal that of the poles.  For example, $z^2/(z-1)$ has a zero of
order two at $0$, a pole of order one at $1$, and a pole of order one at
$\infty$ (because the function grows linearly as $z$ becomes large).

In more complicated function fields, there are additional subtle constraints
on the zeros and poles, which interfere with our ability to construct
auxiliary functions in the proof (specifically, the placeholder $X$ that
measures the size of the desired solution of the equation).  We circumvent
this difficulty in Lemma~\ref{lemma:sap}, using a technique based on the
strong approximation theorem.  This allows us to control the behavior of a
function at all the points in $S$ except one, if we are willing to allowed
uncontrolled behavior at that single point. Furthermore, we can uniformly
bound the bad behavior at the uncontrolled point in terms of the genus of the
function field. This approach introduces error terms into our bounds, but
they are small enough that they disappear entirely in the final result.

We now turn to the details of the proof.  To identify an appropriate
embedding of $\FF_q[z]$ into $\OO_S$, we would like to choose $z \in \OO_S$
so that $|z|_p$ is independent of $p$ for $p \in S$.  In that case, the
absolute values $|\cdot|_p$ with $p \in S$ will all restrict to the same
absolute value on the ring $R = \FF_q[z]$, which we will denote $|\cdot|$.

When $|S|=1$, we can choose any nonconstant element $z$ of $\OO_S$.
When $|S|>1$, it is not as trivial, but fortunately there is always
such an element:

\begin{lemma} \label{lemma:equalval}
There exists an integer $a \ge 1$ and an element $z \in \OO_S$ such
that $v_p(z) = -a$ for all $p \in S$, and we can find such an element
in probabilistic polynomial time.
\end{lemma}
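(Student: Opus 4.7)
The plan is to construct $z$ as a sum of ``building blocks,'' one for each point of $S$, each of which has a pole of the same chosen order $a$ at its designated point and is regular at every other place of $K$. The non-Archimedean triangle inequality will then give $v_p(z) = -a$ for every $p \in S$, because at each such $p$ the minimum valuation among the summands is achieved uniquely.

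First, I would choose a single integer $a$ that is a non-gap at every point of $S$ simultaneously. By Riemann-Roch, for any point $p \in \XX(\FF_q)$ and any $a$ with $a > 2g-2$ (where $g$ is the genus of $\XX$), one has
\[
\dim_{\FF_q} \LL(ap) - \dim_{\FF_q} \LL((a-1)p) = 1,
\]
so $\LL(ap)$ contains a function whose only pole is at $p$ and whose pole order there is exactly $a$. Any $a \ge \max(2g-1,1)$ therefore works; its size is polynomial in the input.

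Next, for each $p \in S$, I would use the standing assumption on computing Riemann-Roch bases to obtain bases of $\LL(ap)$ and $\LL((a-1)p)$, and then pick any $z_p \in \LL(ap) \setminus \LL((a-1)p)$ by routine linear algebra over $\FF_q$ (the quotient has dimension one). By construction $v_p(z_p) = -a$ and $v_q(z_p) \ge 0$ for every other place $q$ of $K$; in particular $z_p \in \OO_S$.

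Finally, set $z = \sum_{p \in S} z_p$. For each $q \in S$, the valuations $v_q(z_{p'})$ with $p' \in S$ satisfy $v_q(z_q) = -a$ while $v_q(z_{p'}) \ge 0$ for $p' \ne q$, so the minimum is uniquely achieved at $p' = q$, and the ultrametric property forces $v_q(z) = -a$. Since each $z_p$ lies in $\OO_S$, so does $z$. The only real ingredient is the uniform choice of $a$, which Riemann-Roch supplies once $a$ exceeds $2g-2$; no union-of-hyperplanes or approximation argument is needed, and the ``probabilistic'' qualification in the statement reflects only possible randomized subroutines used inside the Riemann-Roch basis computation or finite-field linear algebra rather than any essential use of randomness here.
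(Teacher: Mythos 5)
Your proof is correct in its overall strategy and takes a genuinely different, and arguably cleaner, route than the paper. The paper works inside a single Riemann--Roch space $\LL(\Delta_a)$ with $\Delta_a = \sum_{p\in S} ap$ and argues that $\LL(\Delta_a)$ is not covered by the union of the codimension-one subspaces $\LL(\Delta_a-p)$; because a vector space over $\FF_q$ can be covered by as few as $q+1$ hyperplanes, this requires $|S|<q$, which the paper then patches by passing to an extension $\FF_{q^i}$ and taking a norm of the resulting function back down to $\FF_q$. You instead construct one $z_p\in\LL(ap)\setminus\LL((a-1)p)$ per point, so each $z_p$ has its unique pole at $p$, and then sum: at each $q\in S$ the ultrametric forces $v_q(z)=-a$ because the minimum valuation among the summands is attained only by $z_q$. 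This sidesteps the covering bound entirely, works for any $|S|$ without extending the base field, and yields an essentially deterministic procedure once Riemann--Roch bases are available (whereas the paper samples randomly from $\LL(\Delta_a)$). One small correction: $a>2g-2$ is not quite enough to guarantee $\dim\LL(ap)-\dim\LL((a-1)p)=1$, since at $a=2g-1$ the divisor $(a-1)p=(2g-2)p$ may be canonical (e.g., a Weierstrass point of a hyperelliptic curve) and then the dimensions coincide; you need $a-1>2g-2$, i.e., $a\ge\max(2g,1)$, so that Riemann--Roch gives exact dimensions for both spaces. With that one-off-by-one fix the argument is complete.
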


\begin{proof}
Let $\Delta_a$ be the divisor
\[
\sum_{p \in S} a p
\]
with coefficient $a$ for each $p \in S$, and let $g$ be the genus of
the curve $\XX$. If $a|S| > 2g-2$, then by the Riemann-Roch theorem,
\[
\dim_{\FF_q} \LL(\Delta_a) = a|S| - (g-1).
\]
Furthermore, if $a|S| > 2g-1$, then for each $p \in S$,
\[
\dim_{\FF_q} \LL(\Delta_a - p) = a|S| - g.
\]
Thus, if $|S| < q$, then $\LL(\Delta_a)$ cannot be contained in the
union of $\LL(\Delta_a - p)$ over all $p \in S$, and therefore there
exists a function with poles of order exactly $a$ at each point in $S$.
If $|S| < q/2$, then it is easy to find such a function by random
sampling, since at least half the elements in $\LL(\Delta_a)$ will
work. (Recall that as mentioned in Section~\ref{subsec:ffsintro}, we
assume that we can efficiently compute bases of Riemann-Roch spaces.)

This proof requires $|S| < q$, but the same idea works if we pass to a finite
extension $\FF_{q^i}$ of $\FF_q$, and $|S| < q^i$ then suffices. Thus,
if we take $i$ large enough, there exists a function defined over $\FF_{q^i}$
with poles of equal order $a$ at the points in $S$ (and no poles elsewhere).
Now multiplying the $i$ conjugates of this function over $\FF_q$ produces
such a function over $\FF_q$, as desired, with poles of order $a i$.  Taking
$q^i > 2|S|$ gives an efficient algorithm as well.
\end{proof}

For the rest of this section, let $z$ be such a function and let $R =
\FF_q[z]$.  Then the ring $\OO_S$ is a free $R$-module of rank $a|S|$
by Theorem~1.4.11 in \cite{stichtenoth}, as is every nonzero ideal in
$\OO_S$.

As in the previous proofs, we will construct a polynomial $Q(x)$ in the
$\OO_S$-module $\MM$ generated by
\[
x^j f(x)^i I^{k-i} \quad \textup{for} \quad 0 \leq i < k
\textup{ and } 0 \leq j < d
\]
and
\[
x^j f(x)^k \quad \textup{for} \quad 0 \leq j < t.
\]
Let $m = dk+t$.

The module $\MM$ is a submodule of the $\OO_S$-module $\PP$ of
polynomials of degree less than $m$, which is a free $\OO_S$-module of
rank $m$ and hence a free $R$-module of rank $ma|S|$.  Thus, as in the
setting of Lemmas~\ref{lemma:small} and~\ref{lemma:findallsmall}, we
are working with an $R$-module contained in a free $R$-module.

We want $Q(x)$ to have the property that for $w \in \LL(D)$,
\[
N(Q(w)) < N(I)^{\beta k}.
\]
In fact, we will bound $N(Q(w))$ by
\[
N(Q(w)) = \prod_{p \in S} |Q(w)|_p \le
\left(\max_{p \in S} |Q(w)|_p\right)^{|S|},
\]
and we will ensure that
\[
\left(\max_{p \in S} |Q(w)|_p\right)^{|S|} < N(I)^{\beta k}.
\]

Let $q_0,\dots,q_{m-1}$ denote the coefficients of $Q$, so
\[
Q(x) = \sum_{i=0}^{m-1} q_i x^i.
\]
Then
\[
|Q(w)|_p \le \max_i |q_i|_p |w|_p^i.
\]
Suppose the divisor $D$ is given by
\[
D = \sum_{p \in S} \lambda_p p.
\]
Then $|w|_p \le q^{\lambda_p}$ for $w \in \LL(D)$, and thus
\[
|Q(w)|_p \le \max_i |q_i|_p \, q^{i \lambda_p}.
\]

To emulate the analysis from Sections~\ref{section:coppersmith}
and~\ref{section:polycop}, we would like to find $X \in \OO_S$ such that
$v_p(X) = -\lambda_p$ for all $p \in S$.  However, such an element does not
always exist.  Instead, we will construct an element with the desired
valuations at all but one point in $S$.  This approach is a special case of
the strong approximation theorem (Theorem~1.6.5 in \cite{stichtenoth} or
Theorem~6.13 in \cite{rosen}), but as we need only a weaker conclusion and
must consider computational feasibility, we will give a direct proof along
the same lines as Lemma~\ref{lemma:equalval}.

\begin{lemma} \label{lemma:sap}
Suppose $q \ge 2|S|$.  Then for any point $p_0 \in S$ and each divisor
$\sum_{p \in S} \mu_p p$ satisfying $\sum_{p \in S} \mu_p \ge 0$ and
$\mu_{p_0}=0$, there exists an element $X \in \OO_S$ such that $v_p(X)
= -\mu_p$ for all $p \in S \setminus \{p_0\}$, and $v_{p_0}(X) = -2g$,
where $g$ is the genus of $\XX$. Furthermore, we can construct such an
$X$ in probabilistic polynomial time.
\end{lemma}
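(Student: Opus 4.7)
The plan is to construct $X$ as a generic element of the Riemann--Roch space $\LL(D')$, where
\[
D' = 2g\, p_0 + \sum_{p \in S\setminus\{p_0\}} \mu_p\, p.
\]
By definition, every $X \in \LL(D')$ automatically satisfies $v_p(X) \ge -\mu_p$ for $p \in S\setminus\{p_0\}$, $v_{p_0}(X) \ge -2g$, and has no poles outside $S$; equality at each point of $S$ is precisely the condition that $X \in \LL(D')$ but $X \notin \LL(D'-p)$ for every $p \in S$. So the proof reduces to producing an element of $\LL(D')$ outside the union of finitely many proper subspaces, which I would handle by a pigeonhole counting argument.

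First I would check that Riemann--Roch applies cleanly to each divisor $D'-p$. Since $\mu_{p_0}=0$ and $\sum_p \mu_p \ge 0$, one has $\deg(D') \ge 2g$, so $\deg(D'-p) \ge 2g-1 > 2g-2$ for every $p \in S$. Riemann--Roch then yields $\dim_{\FF_q}\LL(D') = \deg(D') - g + 1$ and $\dim_{\FF_q}\LL(D'-p) = \deg(D') - g$, so each $\LL(D'-p)$ has $\FF_q$-codimension exactly one in $\LL(D')$. Counting, the union $\bigcup_{p \in S}\LL(D'-p)$ contains at most $|S|\cdot q^{\dim\LL(D')-1}$ elements out of $q^{\dim\LL(D')}$ total, so under the hypothesis $q \ge 2|S|$ at least half of $\LL(D')$ consists of good $X$. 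This both establishes existence and makes uniform random sampling succeed with probability at least $1/2$ per trial.

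Algorithmically, I would compute an $\FF_q$-basis of $\LL(D')$ via the Riemann--Roch oracle standing-assumed in Section~\ref{subsec:ffsintro}, sample a uniform $\FF_q$-linear combination, and verify the valuations at each $p \in S$ (either by testing non-membership in each $\LL(D'-p)$ through linear algebra on precomputed bases, or directly from local expansions at the points of $S$). The expected number of trials is constant, so the total running time is polynomial. The main delicacy is really bookkeeping rather than conceptual: the coefficient $2g$ at $p_0$ is chosen just large enough to force $\deg(D'-p) > 2g-2$ uniformly over $p \in S$ so that the Riemann--Roch formulas above give equalities rather than inequalities; choosing it any smaller could leave some $D'-p$ below the Riemann--Roch threshold and break the codimension-one calculation. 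Small-genus cases (including $g=0$, where the $p_0$ contribution vanishes and the conclusion $v_{p_0}(X)=0$ follows automatically from the constant function) and the possibility of negative $\mu_p$ at points $p\ne p_0$ should be checked, but neither disturbs the counting.
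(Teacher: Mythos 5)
Your proof is correct and follows essentially the same route as the paper's: you form the divisor $\Delta = 2g\,p_0 + \sum_{p\neq p_0}\mu_p\,p$ (identical to the paper's divisor since $\mu_{p_0}=0$), apply Riemann--Roch to compute $\dim\LL(\Delta)$ and $\dim\LL(\Delta-p)$, and use the union bound $|S|\,q^{\dim\LL(\Delta)-1} \le \tfrac{1}{2}q^{\dim\LL(\Delta)}$ under $q\ge 2|S|$ to conclude existence and the success probability of random sampling. The only differences from the paper are cosmetic (you spell out the degree bounds $\deg(\Delta-p)\ge 2g-1 > 2g-2$ and flag the $g=0$ and negative-$\mu_p$ edge cases explicitly, which the paper leaves implicit), so this counts as the same proof.
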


\begin{proof}
Let $\Delta = \sum_{p \in S} \mu_p p + 2g p_0$.  Then $\deg(\Delta) \ge
2g$, and it follows from Riemann-Roch that $\dim_{\FF_q} \LL(\Delta) =
\deg(\Delta)-(g-1)$ and that $\dim_{\FF_q} \LL(\Delta-p) = \dim_{\FF_q}
\LL(\Delta) - 1$ for all $p \in S$.  We are looking for an element $X$
in $\LL(\Delta)$ but not $\LL(\Delta-p)$ for any $p \in S$.  By
assumption we can construct these Riemann-Roch spaces, and because $|S|
\le q/2$ at least half the elements of $X$ will have the desired
property, so we can find one by random sampling.
\end{proof}

The assumption that $q \ge 2|S|$ will hold in most applications: most
algebraic-geometric codes use a small set $S$, and in fact $|S|$ cannot
be much larger than $q$ because $S \subseteq \XX(\FF_q)$ and
$|\XX(\FF_q)| \le q + 2g\sqrt{q} + 1$ (see Theorem~5.2.3 in
\cite{stichtenoth}). However, if $|S|
> q/2$, then we can simply pass to a finite extension of $\FF_q$.
Thus, without loss of generality we can assume that $q \ge 2|S|$.

By assumption in Theorem~\ref{theorem:ff-coppersmith}, the support of
$D$ is a proper subset of $S$, so we can let $p_0 \in S$ be a point
such that $\lambda_{p_0} = 0$.  Because of the limitations of the
strong approximation theorem, we require such a point to make the
remainder of the proof work. This is not an obstacle to the
applicability of the theorem, because algebraic-geometric codes will
generally not use every point in $\XX(\FF_q)$ for poles or evaluation
points, and if they do we can pass to a finite extension of $\FF_q$ to
generate more points.  Note also that we can assume $\deg(D) \ge 0$,
because otherwise $\LL(D)$ is the empty set.

Now, Lemma~\ref{lemma:sap} lets us construct an element $X \in \OO_S$
such that $v_p(X) = -\lambda_p$ for $p \in S \setminus \{p_0\}$. This
element has the property that $v_p(X^i) = -i\lambda_p$ for $p \in S
\setminus \{p_0\}$.  Unfortunately, the valuation at $p_0$ grows
linearly with $i$ as well, and that will damage our bounds. However, we
can avoid that problem by applying Lemma~\ref{lemma:sap} to construct
elements $X_i$ so that $v_p(X_i) = -i\lambda_p$ for $p \in S \setminus
\{p_0\}$ while maintaining $v_{p_0}(X_i) = -2g$.  Of course we set
$X_0=1$.

In terms of the elements $X_i$, we have
\[
|Q(w)|_p \le \max_i |q_i X_i|_p
\]
for $p \in S \setminus \{p_0\}$.  Furthermore, this inequality holds
for $p = p_0$ because $v_{p_0}(w) \ge 0 \ge v_{p_0}(X_i)$.

Define the norm of a polynomial $\sum_i c_i x^i \in \PP$ (with $c_i \in
\OO_S$) by
\[
\bigg| \sum_i c_i x^i \bigg| = \max_i \max_{p \in S} |c_i|_p.
\]
Note that this defines a non-Archimedean norm on the free $R$-module
$\PP$ satisfying all three properties required in
Section~\ref{subsec:findingshort} (with the absolute value $|\cdot|$ on
$R$).  Here, we crucially use the fact that we have only one absolute
value on $R$; if that were not the case, then property~3 would fail.

Let $T \colon \PP \to \PP$ be the linear transformation that multiplies
the degree $i$ term by $X_i$.  Then
\[
\max_{p \in S} |Q(w)|_p \le \max_{p \in S} \max_i |q_i X_i|_p = |T Q|.
\]
Thus, it will suffice to construct a nonzero polynomial $Q \in \MM$
such that $|TQ|^{|S|} < N(I)^{\beta k}$.

Now we can apply Lemma~\ref{lemma:findallsmall}.  We need to determine
two things: the geometric mean $C$ of the norms of an $R$-basis of
$\PP$ and the dimension of the quotient $\PP/T\MM$.  Then there exists
a nonzero $Q \in \MM$ such that
\[
|TQ| \le C |z|^{\dim_{\FF_q}(\PP/T\MM)/(a|S|m)} = C q^{\dim_{\FF_q}(\PP/T\MM)/(|S|m)},
\]
because these $R$-modules have rank $a|S|m$ and $|z|=q^a$.

Let $b_1,\dots,b_{a|S|}$ be any $R$-basis of $\OO_S$, and let
\[
C = \bigg( \prod_{i=1}^{a |S|} \max_{p \in S}
|b_i|_p \bigg)^\frac{1}{a |S|}.
\]
Then the elements $b_i x^j \in \PP$ (with $1 \le i \le a|S|$ and $0 \le
j < m$) form an $R$-basis of $\PP$, and the geometric mean of their
norms is $C$ because $|b_i x^j|$ is independent of the degree $j$.

To compute the dimension of $\PP/T\MM$, note that the generators of
$\MM$ are triangular (i.e., given by polynomials of each degree). Thus,
we merely need to add the dimensions of the quotients of $\OO_S$ by the
ideals of leading coefficients.  From the polynomials $X_{di+j} x^j
f(x)^i I^{k-i}$, we see that the leading coefficients form the ideal
$X_{di+j} I^{k-i}$.  Thus,
\begin{align*}
q^{\dim_{\FF_q} \PP/T\MM} &= |\PP/T\MM|\\
 &=
N(I)^{d k(k+1)/2} \prod_{i=0}^{m-1} N(X_i)\\
&=
N(I)^{d k(k+1)/2} \bigg( \prod_{p \in S} q^{\lambda_p m(m-1)/2}
\bigg) \prod_{i=0}^{m-1} |X_i|_{p_0}.
\end{align*}
In other words,
\[
q^{\dim_{\FF_q} \PP/T\MM} \le N(I)^{dk(k+1)/2} q^{\deg(D) m(m-1)/2} q^{2mg}.
\]

Now applying Lemma~\ref{lemma:findallsmall} shows that we can find a
nonzero polynomial $Q \in \MM$ such that
\[
|T Q|^{|S|} \le C q^{2g} q^{\deg(D) (m-1)/2} N(I)^{dk(k+1)/(2m)}.
\]
We want to achieve $|TQ|^{|S|} < N(I)^{\beta k}$.  Let $N(I) = q^n$ and
\[
\ell = \deg(D) + \frac{2}{m-1} \log_q \big(C q^{2g}\big).
\]
Then Lemma~\ref{lem:k-and-m} applies, and shows that we can achieve
$|TQ|^{|S|} < N(I)^{\beta k}$ whenever $\ell < n \left(
\frac{\beta^2}{d} - \varepsilon \right)$, which is equivalent to
\[
\left (C q^{2g} \right)^{\frac{2}{m-1}} q^{\deg(D)} <
N(I)^{\frac{\beta^2}{d}-\varepsilon}.
\]
We can take the denominator of $\beta$ to be a divisor of $n$ (because
$N(I)=q^n$).  Thus, $N(I)^{\beta^2/d}$ is an integral power of
$q^{1/(nd)}$, as of course is $q^{\deg(D)}$, and to prove the bound in
Theorem~\ref{theorem:ff-coppersmith} it suffices to prove it to within
a factor of less than $q^{1/(nd)}$.

Now let $\varepsilon < 1/(2n^2 d)$ and $m > 1 + 4n d (2g + \log_q C)$. Then
$N(I)^\varepsilon$ and $\left (C q^{2g} \right)^{\frac{2}{m-1}}$ are both
strictly less than $q^{1/(2nd)}$.  Thus, our algorithm works as long as
\[
q^{\deg(D)} < N(I)^{\beta^2/d}.
\]
This completes the proof of Theorem~\ref{theorem:ff-coppersmith}.

\section*{Acknowledgements}

We are grateful to Amanda Beeson, Keith Conrad, Abhinav Kumar, Victor Miller,
Vincent Neiger, Chris Peikert, Bjorn Poonen, Nigel Smart, and Madhu Sudan for
helpful conversations, comments, and references.

\begin{bibdiv}
\begin{biblist}

\bib{ajtai}{article}{author={Ajtai, M.},
title={The shortest vector problem in $L_2$ is NP-hard for randomized reductions},
book={title={Proceedings of the Thirtieth Annual ACM Symposium on Theory of Computing (Dallas, Texas, United
    States, May 24--26, 1998)},
    date={1998},
    publisher={Association for Computing Machinery},
    address={New York, NY}},
pages={10--19}}

\bib{alekhnovich}{article}{author={Alekhnovich, M.},
title={Linear Diophantine equations over polynomials and soft decoding of Reed-Solomon codes},
journal={IEEE Trans.\ Inform.\ Theory},
volume={51},
pages={2257--2265},
date={2005}}

\bib{AKS}{article}{author={Ajtai, M.}, author={Kumar, R.}, author={Sivakumar, D.},
title={A sieve algorithm for the shortest lattice vector problem},
book={title={Proceedings of the Thirty-Third Annual ACM Symposium on Theory of Computing (Hersonissos, Greece, July 6--8, 2001)},
    date={2001},
    publisher={Association for Computing Machinery},
    address={New York, NY}},
pages={601--610}}

\bib{beelen-brander}{article}{author={Beelen, P.}, author={Brander, K.},
title={Key equations for list decoding of Reed-Solomon codes and how to solve them},
journal={J.\ Symbolic Computation},
volume={45},
pages={773--786},
date={2010}}

\bib{beelen-brander2}{article}{author={Beelen, P.}, author={Brander, K.},
title={Efficient list decoding of a class of algebraic-geometry codes},
journal={Adv.\ Math.\ Commun.},
volume={4},
pages={485--518},
date={2010}}

\bib{bernstein}{article}{author={Bernstein, D. J.},
title={List decoding for binary Goppa codes},
book={title={Coding and Cryptology},
    series={Lecture Notes in Computer Science},
    volume={6639},
    publisher={Springer-Verlag},
    address={Berlin, Heidelberg},
    date={2011}},
pages={62--80}}

\bib{biasse-quintin}{article}{author={Biasse, J.-F.}, author={Quintin, G.},
title={An algorithm for list decoding number field codes},
pages={91--95},
book={title={2012 IEEE International Symposium on Information Theory Proceedings},
publisher={IEEE}, 
date={2012}}}

\bib{bleichenbacher-nguyen}{article}{author={Bleichenbacher, D.}, author={Nguyen, P. Q.},
title={Noisy polynomial interpolation and noisy Chinese
remaindering},
book={title={Advances in Cryptology -- EUROCRYPT 2000},
    series={Lecture Notes in Computer Science},
    volume={1807},
    publisher={Springer-Verlag},
    address={Berlin, Heidelberg},
    date={2000}},
pages={53--69}}

\bib{blomer:rsa}{article}{author={Bl\"{o}mer, J.}, author={May, A.},
title={New partial key exposure attacks on {RSA}},
book={title={Advances in Cryptology -- CRYPTO 2003},
    series={Lecture Notes in Computer Science},
    volume={2729},
    publisher={Springer-Verlag},
    address={Berlin, Heidelberg},
    date={2003}},
pages={27--43}}

\bib{boneh:smooth}{article}{author={Boneh, D.},
title={Finding smooth integers in short intervals using CRT decoding},
book={title={Proceedings of the Thirty-Second Annual ACM Symposium on Theory of Computing (Portland, Oregon, United States, May 21--23, 2000)},
    date={2000},
    publisher={Association for Computing Machinery},
    address={New York, NY}},
pages={265--272}}

\bib{boneh:rsa}{article}{author={Boneh, D.}, author={Durfee, G.}, author={Frankel, Y.},
title={An attack on {RSA} given a small fraction of the private key bits},
book={title={Advances in Cryptology -- ASIACRYPT '98}, 
    series={Lecture Notes in Computer Science},
    volume={1514},
    publisher={Springer-Verlag},
    address={Berlin, Heidelberg},
    date={1998}},
pages={25--34}}

\bib{nfcrypto}{article}{author={Buchmann, J.}, author={Takagi, T.}, author={Vollmer, U.},
title={Number field cryptography},
book={title={High Primes and Misdemeanours},
    subtitle={Lectures in Honour of the 60th Birthday of Hugh Cowie Williams}
    series={Fields Institute Communications},
    volume={41},
    publisher={American Mathematical Society},
    address={Providence, RI},
    date={2004}},
pages={111--121}}

\bib{cohen}{book}{author={Cohen, H.},
title={A Course in Computational Algebraic Number Theory},
series={Graduate Texts in Mathematics},
volume={138},
publisher={Springer-Verlag},
address={Berlin, Heidelberg},
date={1993}}

\bib{cohn-heninger}{article}{author={Cohn, H.},
author={Heninger, N.},
title={Approximate common divisors via lattices},
status={to appear in Proceedings of ANTS 2012, \texttt{arXiv:1108.2714}}}

\bib{coppersmith}{article}{author={Coppersmith, D.},
title={Small solutions to polynomial equations, and low exponent {RSA} vulnerabilities},
journal={J.\ Cryptology},
volume={10},
pages={233--260},
date={1997}}

\bib{coppersmith:small}{article}{author={Coppersmith, D.},
title={Finding small solutions to small degree polynomials},
book={title={Cryptography and Lattices},
    series={Lecture Notes in Computer Science},
    volume={2146},
    publisher={Springer-Verlag},
    address={Berlin, Heidelberg},
    date={2001}},
pages={20--31}}

\bib{chgn}{article}{author={Coppersmith, D.}, author={Howgrave-Graham, N.}, author={Nagaraj, S. V.},
title={Divisors in residue classes, constructively},
journal={Math.\ Comp.},
volume={77},
pages={531--545},
date={2008}}

\bib{coxon}{article}{author={Coxon, N.},
title={List decoding of number field codes},
journal={Des.\ Codes Cryptogr.}, status={to appear (published online with
\doi{10.1007/s10623-013-9803-x})}}

\bib{fieker-pohst}{article}{author={Fieker, C.}, author={Pohst, M. E.},
title={On lattices over number fields},
book={title={Algorithmic Number Theory},
    series={Lecture Notes in Computer Science},
    volume={1122},
    publisher={Springer-Verlag},
    address={Berlin, Heidelberg},
    date={1996}},
pages={133--139}}

\bib{fieker-stehle}{article}{author={Fieker, C.}, author={Stehl{\'e}, D.},
title={Short bases of lattices over number fields},
book={title={Algorithmic Number Theory},
    series={Lecture Notes in Computer Science},
    volume={6197},
    publisher={Springer-Verlag},
    address={Berlin, Heidelberg},
    date={2010}},
pages={157--173}}

\bib{vonzurgathen}{article}{author={von zur Gathen, J.},
title={Hensel and {N}ewton methods in valuation rings},
journal={Math.\ Comp.},
volume={42},
pages={637--661},
date={1984}}

\bib{vzgg}{book}{author={von zur Gathen, J.}, author={Gerhard, J.},
title={Modern Computer Algebra},
edition={second edition},
publisher={Cambridge University Press},
address={Cambridge, England},
date={2003}}

\bib{vzGK}{article}{author={von zur Gathen, J.}, author={Kaltofen, E.},
title={Factorization of multivariate polynomials over finite fields},
journal={Math.\ Comp.},
volume={45},
pahges={251--261}
date={1985}}

\bib{GJV}{article}{author={Giorgi, P.}, author={Jeannerod, C.-P.}, author={Villard, G.},
title={On the complexity of polynomial matrix computations},
book={title={Proceedings of the 2003 International Symposium on Symbolic and Algebraic Computation (Philadelphia, Pennsylvania, United States, August 3--6, 2003)},
    date={2003},
    publisher={Association for Computing Machinery},
    address={New York, NY}},
pages={135--142}}

\bib{guruswami-sahai-sudan}{article}{author={Guruswami, V.}, author={Sahai, A.}, author={Sudan, M.},
title={``{S}oft-decision'' decoding of {C}hinese remainder codes},
book={title={Proceedings of the 41st Annual Symposium on Foundations of Computer Science (Redondo Beach, California, United States, November 12--14, 2000)},
date={2000},
publisher={IEEE Computer Society},
address={Los Alamitos, CA}}
pages={159--168}}

\bib{guruswami-sudan}{article}{author={Guruswami, V.}, author={Sudan, M.},
title={Improved decoding of Reed-Solomon and algebraic-geometry codes},
journal={IEEE Trans.\ Inform.\ Theory},
volume={45},
pages={1757--1767},
date={1999}}

\bib{howgrave-graham-gcd}{article}{author={Howgrave-Graham, N.},
title={Approximate integer common divisors},
book={title={Cryptography and Lattices},
    series={Lecture Notes in Computer Science},
    volume={2146},
    publisher={Springer-Verlag},
    address={Berlin, Heidelberg},
    date={2001}},
pages={51--66}}

\bib{huangierardi}{article}{author={Huang, M.-D.}, author={Ierardi, D.},
title={Efficient algorithms for the {R}iemann-{R}och problem and for addition in the {J}acobian of a curve},
journal={J.\ Symb.\ Comput.},
volume={18},
pages={519--539},
date={1994}}

\bib{Kailath}{book}{author={Kailath, T.},
title={Linear Systems},
publisher={Prentice-Hall, Inc.},
address={Upper Saddle River, NJ},
date={1980}}

\bib{konyagin}{article}{author={Konyagin, S. V.}, author={Steger, T.},
title={On polynomial congruences},
journal={Math.\ Notes},
volume={55},
pages={596--600},
date={1994}}

\bib{lenstra:factoring}{article}{author={Lenstra, A. K.},
title={Factoring polynomials over algebraic number fields},
book={title={Computer Algebra},
    series={Lecture Notes in Computer Science},
    volume={162},
    publisher={Springer-Verlag},
    address={Berlin, Heidelberg},
    date={1983}},
pages={245--254}}

\bib{lenstra:multi}{article}{author={Lenstra, A. K.},
title={Factoring multivariate polynomials over algebraic number fields},
journal={SIAM J.\ Comput.},
volume={16},
pages={591--598},
date={1987}}

\bib{lenstra:algorithms}{article}{author={Lenstra, H. W.},
title={Algorithms in algebraic number theory},
journal={Bull.\ Amer.\ Math.\ Soc.},
volume={26},
pages={211--24},
date={1992}}

\bib{lll}{article}{author={Lenstra, A. K.}, author={Lenstra, H. W.}, author={Lov\'{a}sz, L.},
title={Factoring polynomials with rational coeficients},
journal={Math. Ann.},
volume={261},
pages={515--534},
year={1982}}

\bib{lorenzini}{book}{author={Lorenzini, D.},
title={An invitation to arithmetic geometry},
series={Graduate Studies in Mathematics},
volume={9},
publisher={American Mathematical Society},
address={Providence, RI},
date={1996}}

\bib{LPR}{article}{author={Lyubashevsky, V.}, author={Peikert, C.}, author={Regev, O.},
title={On ideal lattices and learning with errors over rings},
book={title={Advances in Cryptology -- EUROCRYPT 2010},
    series={Lecture Notes in Computer Science},
    volume={6110},
    publisher={Springer-Verlag},
    address={Berlin, Heidelberg},
    date={2010}},
pages={1--23}}

\bib{manders}{article}{author={Manders, K.}, author={Adleman, L.},
title={NP-complete decision problems for quadratic polynomials},
book={title={Proceedings of the Eighth Annual ACM Symposium on Theory of Computing (Hershey, Pennsylvania, United States, May 3--5, 1976)},
    date={1976},
    publisher={Association for Computing Machinery},
    address={New York, NY}},
pages={23--29}}

\bib{mason}{book}{author={Mason, R. C.},
title={Diophantine Equations over Function Fields},
series={London Mathematical Society Lecture Note Series},
volume={96},
publisher={Cambridge University Press},
address={Cambridge, England},
date={1984}}

\bib{may:thesis}{thesis}{author={May, A.},
title={New {RSA} vulnerabilities using lattice reduction methods},
type={Ph.D. thesis},
school={University of Paderborn},
date={2003}}

\bib{may:survey}{article}{author={May.,  A.},
title={Using {LLL}-reduction for solving {RSA} and factorization problems},
book={editor={Nguyen, P. Q.}, editor={Vall\'ee, B.},
    title={The LLL Algorithm},
    publisher={Springer-Verlag},
    address={Berlin, Heidelberg},
    date={2010}},
pages={315--348}}

\bib{naor-pinkas}{article}{author={Naor, M.}, author={Pinkas, B.},
title={Oblivious transfer and polynomial evaluation},
book={title={Proceedings of the Thirty-First Annual ACM Symposium on Theory of Computing (Atlanta, Georgia, United States, May 1--4, 1999)},
    date={1999},
    publisher={Association for Computing Machinery},
    address={New York, NY}},
pages={245--254}}

\bib{peikert}{article}{author={Peikert, C.}, author={Rosen, A.},
title={Lattices that admit logarithmic worst-case to average-case connection factors},
book={title={Proceedings of the Thirty-Ninth Annual ACM Symposium on Theory of Computing (San Diego, California, United States, June 11--13, 2007)},
    date={2007},
    publisher={Association for Computing Machinery},
    address={New York, NY}},
pages={478--487}}

\bib{rosen}{book}{author={Rosen,  M.},
title={Number Theory in Function Fields},
series={Graduate Texts in Mathematics},
volume={210},
publisher={Springer-Verlag},
address={New York},
date={2002}}

\bib{roth-ruckenstein}{article}{author={Roth, R. M.}, author={Ruckenstein, G.},
title={Efficient decoding of {R}eed-{S}olomon codes beyond half the minimum distance},
journal={IEEE Trans.\ Inform.\ Theory},
volume={46},
pages={246--257},
date={2000}}

\bib{shokrollahi-wasserman}{article}{author={Shokrollahi, M. A.}, author={Wasserman, H.},
title={List decoding of algebraic-geometric codes},
journal={IEEE Trans.\ Inform.\ Theory},
volume={45},
pages={432--437},
date={1999}}

\bib{shoup:oaep}{article}{author={Shoup, V.},
title={{OAEP} reconsidered},
book={title={Advances in Cryptology -- CRYPTO 2001},
    series={Lecture Notes in Computer Science},
    volume={2139},
    publisher={Springer-Verlag},
    address={Berlin, Heidelberg},
    date={2001}},
pages={239--259}}

\bib{stichtenoth}{book}{author={Stichtenoth, H.},
title={Algebraic Function Fields and Codes},
edition={second edition},
series={Graduate Texts in Mathematics},
volume={254},
publisher={Springer-Verlag},
address={Berlin, Heidelberg},
date={2010}}

\bib{sudan}{article}{author={Sudan, M.},
title={Ideal error-correcting codes: Unifying algebraic and number-theoretic algorithms},
book={title={Applied Algebra, Algebraic Algorithms and Error-Correcting Codes},
    series={Lecture Notes in Computer Science},
    volume={2227},
    publisher={Springer-Verlag},
    address={Berlin, Heidelberg},
    date={2001}},
pages={36--45}}

\bib{VW}{article}{author={Vassilevska Williams, V.},
title={Multiplying matrices faster than Coppersmith-Winograd},
book={title={Proceedings of the Fourty-Fourth ACM Symposium on Theory of Computing (New York, New York, United States, May 20--22, 2012)},
    publisher={Association for Computing Machinery},
    address={New York, NY},
    date={2012}}
pages={887--898}}

\end{biblist}
\end{bibdiv}

\end{document}